\newtheorem{thrm}{Theorem}[section]
\newtheorem{lem}[thrm]{Lemma}
\newtheorem{coro}[thrm]{Corollary}
\newtheorem{conj}[thrm]{Conjecture}
\theoremstyle{definition}
\newtheorem{remark}[thrm]{Remark}
\newenvironment{proof'}{{\em Proof of Main Theorem.} }{\hfill$\Box$\vspace{0.05in}}
\numberwithin{equation}{section}
\title[DEFORMING SUBMANIFOLDS]{\large{Deforming submanifolds of arbitrary codimension in a sphere}}
\author{Kefeng Liu}
\address{Center of Mathematical Sciences, Zhejiang University,
             Hangzhou,  310027, People's Republic of China; Department of Mathematics, UCLA, Box 951555, Los Angeles, CA, 90095-1555 }
\email{liu@cms.zju.edu.cn, liu@math.ucla.edu}
\author{Hongwei Xu}
\address{Center of Mathematical Sciences, Zhejiang University,
             Hangzhou,  310027, People's Republic of China}
\email{xuhw@cms.zju.edu.cn}
\author{Entao Zhao}
\address{Center of Mathematical Sciences, Zhejiang University,
             Hangzhou,  310027, People's Republic of China}
\email{zhaoet@cms.zju.edu.cn}
\thanks{Research supported by the National Natural Science Foundation
of China, Grant No. 11071211; the Trans-Century Training Programme
Foundation for Talents by the Ministry of Education of China, and
the China Postdoctoral Science Foundation, Grant No. 20090461379.}
\keywords{Mean curvature flow, submanifolds of spheres, convergence
theorem, differentiable sphere theorem, integral curvature}
\subjclass[2000]{53C44, 53C40}
\begin{document}

\begin{abstract} In this paper, we prove some convergence theorems for the mean curvature flow of
closed submanifolds in the unit sphere $\mathbb{S}^{n+d}$ under
integral curvature conditions. As a consequence, we obtain several
differentiable sphere theorems for certain submanifolds in
$\mathbb{S}^{n+d}$.

\end{abstract}
\maketitle

\section{Introduction}

Let $M$ be an $n$-dimensional immersed submanifold in a Riemannian
manifold $N^{n+d}$. Throughout this paper, we always assume $M$ is
connected.  Let $F_0:M\rightarrow N^{n+d}$ denote the isometric
immersion. Consider the deformation of $M$ under mean curvature
flow, i.e., consider the one-parameter family $F_{t}=F(\cdot,t)$ of
immersions $F_{t}:M\rightarrow N^{n+d}$ with corresponding images
$M_{t}=F_{t}(M)$ such that
\begin{eqnarray}\label{mcf}
\label{MCF}\left\{
\begin{array}{ll}
\frac{\partial}{\partial t}F(x,t)=H(x,t),\\
F(x,0)=F_0(x),
\end{array}\right.
\end{eqnarray}
where $H$ is the mean curvature vector of $M_t$.

The mean curvature flow was proposed by Mullins \cite{Mullins} to
describe the formation of grain boundaries in annealing metals. In
\cite{B}, Brakke introduced the motion of a submanifold in the
Euclidean space by its mean curvature in arbitrary codimension and
constructed a generalized varifold solution for all time. For the
classical solution of the mean curvature flow, most works have been
done on hypersurfaces. For the initial hypersurface satisfying
certain convexity condition, Huisken \cite{H1,H2} proved that the
solution of the mean curvature flow converges to a point as the time
approaches the finite maximal time. Later, Huisken \cite{H3}
investigated the mean curvature flow of a hypersurface in the unit
sphere $\mathbb{S}^{n+1}$. He proved that if the initial
hypersurface satisfies certain pointwise pinching condition, then
either $M_t$ converges to a round point in finite time, or $M_t$
converges to a total geodesic sphere of $\mathbb{S}^{n+1}$ as
$t\rightarrow\infty$.

For the mean curvature flow of submanifolds with higher codimension,
some important results have been obtained by Wang, Smoczyk and many
others, see \cite{Sm,Sm2,SW,WaM1,WaM2,WaM3} etc. for example.
Recently, Andrews-Baker \cite{Andrews-Baker,Baker} and
Liu-Xu-Ye-Zhao \cite{LXYZ2} proved convergence theorems for the mean
curvature flows of submanifolds satisfying certain pinching
conditions in space forms. This generalizes the convergence results
of mean curvature flow for hypersurfaces due to Huisken
\cite{H1,H2,H3} to the case of arbitrary codimensions.

An attractive question is: can one prove the convergence theorem of
the mean curvature flow of submanifolds satisfying suitable integral
curvature pinching condition? The study of convergence for the mean
curvature flow of hypersurfaces with small total curvature was
initiated in \cite{Zhao}. In \cite{LXYZ}, Liu-Xu-Ye-Zhao proved two
convergence theorems for the mean curvature flow of closed
submanifolds of arbitrary codimension in Euclidean space under
suitable integral curvature conditions.

In this paper, we investigate the convergence of mean curvature flow
of submanifolds with integral curvature bound in a sphere. In
particular, we obtain the following theorem.

\begin{thrm}\label{main-thm-1}
Let $F:M\rightarrow \mathbb{S}^{n+d}$ be an $n$-dimensional
$(n\geq3)$ smooth closed submanifold in the unit sphere with
codimension $d\geq1$. Let $F_t$ be the solution of the mean
curvature flow with $F$ as initial value. For any positive number
$p\in(n,\infty)$, there exists an explicitly computable positive constant $C_{n,p}$
depending only on $n$ and $p$, such that if
\begin{eqnarray*}
||A||_{L^p}<C_{n,p},
\end{eqnarray*}
then either $M_t$ converges to a round point in finite time, or
$M_t$ converges to a totally geodesic sphere in $\mathbb{S}^{n+d}$
as $t\rightarrow\infty$.
 \end{thrm}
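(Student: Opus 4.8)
The plan is to run the now-standard parabolic machinery for mean curvature flow in arbitrary codimension, but with the pointwise pinching hypothesis replaced by an $L^p$ smallness condition on the second fundamental form. First I would establish the short-time existence and set up evolution equations along the flow: for $|A|^2$, $|H|^2$, and the traceless part $|\mathring{A}|^2 = |A|^2 - \frac1n|H|^2$, following Andrews--Baker. The key structural quantity is the ratio $|\mathring{A}|^2/|H|^2$ (or an equivalent ``pinching cone'' condition $|A|^2 \le a|H|^2 + b$ in the sphere, where the constant $b$ accounts for the ambient curvature $1$). The main point is that the pointwise pinching region is preserved by the flow: this is a maximum-principle argument on the evolution inequality for the pinching function, using Simons-type identities and the algebraic estimates on $|\nabla A|^2$ versus $|\nabla H|^2$ that are by now routine in this literature.

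The crucial new step --- and the one I expect to be the main obstacle --- is to show that the $L^p$ smallness of $A$ at time $0$ forces the solution into the pointwise pinching cone after a short time, or at least at some time, so that the Andrews--Baker convergence theorem can be invoked. Here I would run a Stampacchia--De Giorgi iteration (Moser iteration) on the evolution inequality satisfied by a suitable power of $(|A|^2 - \varepsilon)_+$ or of the scale-invariant pinching deficit, exactly as in Huisken's and Schulze's $L^p$-estimates for the hypersurface case and as in Liu--Xu--Ye--Zhao for the Euclidean higher-codimension case. The Michael--Simon Sobolev inequality on the evolving submanifold provides the functional-analytic input; the $L^p$ bound $\|A\|_{L^p} < C_{n,p}$ with $p > n$ is precisely what is needed to start the iteration and absorb the ambient curvature terms, and it yields an a priori $L^\infty$ bound on $|A|$ that degrades the total curvature. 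The constant $C_{n,p}$ is then determined explicitly by the Sobolev constant, the exponent $p$, and the algebraic pinching constants; one must be careful that the extra zeroth-order terms coming from the curvature of $\mathbb{S}^{n+d}$ (absent in the Euclidean case) are controlled by the same smallness assumption, which forces $n \ge 3$ and $p > n$.

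Once the solution is known to lie in the Andrews--Baker pinching cone at some time $t_0 > 0$, I would restart the flow at $t_0$ and apply their convergence theorem directly: either the flow contracts to a round point in finite time (the mean curvature blows up and the rescaled flow converges to a shrinking sphere), or $|H|$ is bounded, the flow exists for all time, and $M_t$ converges smoothly to a totally geodesic $\mathbb{S}^n \subset \mathbb{S}^{n+d}$. A separate short argument rules out the degenerate possibility that the flow reaches a minimal submanifold other than the totally geodesic sphere: the pinching is strictly preserved and the only minimal submanifold in the relevant region is the equatorial sphere (Simons/Chern--do Carmo--Kobayashi type gap). Finally I would collect the constraints on $C_{n,p}$ from the iteration and from the pinching preservation, take their minimum, and verify it is explicitly computable in terms of $n$ and $p$, completing the proof.
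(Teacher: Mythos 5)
Your overall architecture is the same as the paper's: adapt the Michael--Simon inequality to submanifolds of $\mathbb{S}^{n+d}$ via the embedding into $\mathbb{R}^{n+d+1}$ (this is the paper's Lemma \ref{Sobolev-ineq}, where the ambient curvature appears only through the harmless $n+|H|$ term), run a parabolic Moser iteration to convert initial integral smallness into a pointwise bound at a slightly later time, and then invoke the Andrews--Baker/Baker pinching convergence theorem (so your ``separate argument'' excluding other minimal limits is unnecessary --- Baker's dichotomy already gives round point or totally geodesic sphere). The paper iterates on the regularized traceless quantity $h_\sigma=(|\mathring{A}|^2+nd\sigma^2)^{1/2}$, whose evolution inequality $\partial_t h_\sigma\leq\Delta h_\sigma+13|A|^2h_\sigma$ has a purely multiplicative reaction term, so pointwise smallness of $|\mathring{A}|$ (hence the pinching $|A|^2\leq\frac{1}{n-1}|H|^2+2$, resp. $\frac49|H|^2+\frac43$ for $n=3$) follows from small $\|\mathring{A}\|_{L^q}$; your variant of iterating a power of $(|A|^2-\varepsilon)_+$ could be made to work for the main theorem, but this is a detail.

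The genuine gap is in the step you describe as ``the $L^p$ bound is precisely what is needed to start the iteration.'' The hypothesis $\|A\|_{L^p}<C_{n,p}$ is imposed only at $t=0$, whereas the Moser iteration needs two things on a time interval of definite length depending only on $n,p$ and the initial bound: (i) uniform control of the Sobolev constant along the flow, which by Lemma \ref{Sobolev-ineq} means control of $\|H\|_{L^p(M_t)}$ (or $\|A\|_{L^p(M_t)}$) for all $t$ in the interval, not just at $t=0$; and (ii) the guarantee that the flow actually survives that long, since the parabolic sup-estimate degenerates like $t^{-(n+2)/q}$ as $t\to0^+$ and only becomes useful at a definite positive time $T_2$. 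The paper supplies (i) by Lemma \ref{lem:time>T-S^n+d}: a differential inequality for $\int_{M_t}|A|^p\,d\mu_t$, derived from (\ref{evo-ineq-A}) together with the Sobolev inequality itself (note the feedback: the Sobolev constant involves the very quantity being estimated, which is why the absorption with Young's inequality must be done carefully), showing $\|A\|_{L^p(M_t)}\leq2\Lambda$ on $[0,T_1]$ with $T_1=T_1(n,p,\Lambda)$. It supplies (ii) by the extension criterion Lemma \ref{lp-extension}, proved by a rescaling/blow-up and compactness argument: bounded $\int_{M_t}|A|^p\,d\mu_t$ with $p>n$ rules out finite-time singularity formation on the interval where the $L^p$ bound persists. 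Neither ingredient appears in your proposal, and without them the claim that $L^p$ smallness ``forces the solution into the pointwise pinching cone after a short time'' is unsupported --- the flow could a priori become singular, or the Sobolev constant could blow up, before the iteration produces any pointwise information. Adding these two lemmas (propagation of the $L^p$ bound for a quantified time, plus the $p>n$ extension criterion, which is sharp in view of shrinking umbilical spheres) would close the gap and essentially reproduce the paper's proof.
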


In Theorem \ref{main-thm-1}, $A$ and $||\cdot||_{L^p}$ denote the
second fundamental form of a submanifold and the $L^p$-norm of a
tensor or a function, respectively.

\begin{remark}Let $M$ be a totally umbilical sphere $\mathbb{S}^{n}\Big(\frac{n}{\sqrt{|H|^2+n^2}}\Big)$ in
$\mathbb{S}^{n+d}$. Then $||A||_{L^p}={\rm
Vol}(M)^{\frac{1}{p}}\cdot |A| \leq {\rm
Vol}(\mathbb{S}^{n})^{\frac{1}{p}}\cdot |A|$. If the mean curvature
of $M$ satisfies $|H|\le\frac{\sqrt{n}C_{n,p}}{{\rm
Vol}(\mathbb{S}^{n})^{\frac{1}{p}}}$, then $||A||_{L^p}<C_{n,p}$.
Obviously, if $H=0$, $M_t$ is unchanged along the mean curvature
flow, and if $H\neq 0$, $M_t$ shrinks to a round point in finite
time. Moreover, we can construct  submanifolds from
$\mathbb{S}^{n}\Big(\frac{n}{\sqrt{|H|^2+n^2}}\Big)$ by  small
perturbations such that they satisfy $||A||_{L^p}<C_{n,p}$. If the
perturbation is small enough, then we can find the submanifold $M$
such that along the mean curvature flow $M_t$ shrinks to a round
point in finite time or converges to a totally geodesic sphere in
$\mathbb{S}^{n+d}$ as time tends to infinity.
\end{remark}

 Let $M$ be an $n$-dimensional closed submanifold immersed in a
complete simply connected $(n+d)$-dimensional space form
$\mathbb{F}^{n+d}(c)$ of constant sectional curvature $c$. The
following theorem was proved  by K. Shiohama and the second author
\cite{SX}.

\begin{thrm}[\cite{SX}]\label{shiohama-xu}Let $M$ be an $n$-dimensional $(n\geq2)$
smooth closed submanifold in $\mathbb{F}^{n+d}(c)$ with $c\geq0$.
There is an explicitly given positive constant $B_{n}$ depending
only on $n$ such that if $||\mathring{A}||_{L^n}<B_{n},$ then $M$ is
homeomorphic to a sphere.
\end{thrm}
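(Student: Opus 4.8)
\emph{Sketch of a possible proof.} The plan is to combine Morse theory for linear height functions with a Chern--Lashof type integral-geometric estimate, using the smallness of $\|\mathring A\|_{L^{n}}$ to exclude critical points of intermediate index. Since $\|\mathring A\|_{L^{n}}$ is invariant under rescaling of $\mathbb F^{n+d}(c)$, I would first reduce to $c\in\{0,1\}$; for definiteness I describe the case $c=0$, so $M\hookrightarrow\mathbb R^{n+d}$ (the case $c=1$ uses $\mathbb S^{n+d}\hookrightarrow\mathbb R^{n+d+1}$ and is entirely analogous, with $S^{n+d}$ replacing $S^{n+d-1}$ and an extra weight $\sin^{n+d-1}\theta$ appearing below). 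For a unit vector $v\in\mathbb R^{n+d}$ put $h_{v}=\langle\,\cdot\,,v\rangle|_{M}$. By Sard's theorem $h_{v}$ is a Morse function for almost every $v$; its critical points are the points $p\in M$ at which $v$ is a unit normal $\xi\in\nu_{p}M$, and at such a point the Hessian of $h_{v}$ equals the shape operator $A_{\xi}$. For every regular value $v$ the weak Morse inequalities give $\mu_{k}(v)\ge b_{k}(M;\mathbb Z_{2})$, where $\mu_{k}(v)$ is the number of index-$k$ critical points of $h_{v}$.

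The first step would be a pointwise determinant estimate. Suppose $p$ is a critical point of $h_{v}$ of index $k$ with $1\le k\le n-1$, and let $\lambda_{1},\dots,\lambda_{n}$ be the eigenvalues of $A_{\xi}$, with $\bar\lambda=\tfrac1n\sum_{i}\lambda_{i}$. The trace-free part of $A_{\xi}$ is $\mathring A_{\xi}$, so $|\lambda_{i}-\bar\lambda|\le\sqrt{(n-1)/n}\,|\mathring A_{\xi}|\le\sqrt{(n-1)/n}\,|\mathring A(p)|$. Since $1\le k\le n-1$, some $\lambda_{i}$ is positive and some is negative, which forces $|\bar\lambda|\le\sqrt{(n-1)/n}\,|\mathring A(p)|$ as well; hence $|\lambda_{i}|\le 2\sqrt{(n-1)/n}\,|\mathring A(p)|$ for all $i$, and therefore
\[
\bigl|\det(\operatorname{Hess}h_{v})(p)\bigr|\le\Bigl(2\sqrt{\tfrac{n-1}{n}}\Bigr)^{n}|\mathring A(p)|^{n}.
\]

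The main step, and the one I expect to be the real obstacle, is to pass to an integral-geometric bound whose constant depends only on $n$ and not on the codimension $d$. By the classical Chern--Lashof area formula applied to the generalized Gauss map $\nu^{1}M\to S^{n+d-1}$ on each index stratum, $\int_{S^{n+d-1}}\mu_{k}(v)\,dv=\int_{\{\operatorname{ind}=k\}}|\det A_{\xi}|$. Summing over $1\le k\le n-1$ and inserting the pointwise bound gives
\[
\sum_{k=1}^{n-1}\int_{S^{n+d-1}}\mu_{k}(v)\,dv\le\Bigl(2\sqrt{\tfrac{n-1}{n}}\Bigr)^{n}\int_{M}\Bigl(\int_{S^{d-1}\subset\nu_{p}M}|\mathring A_{\xi}|^{n}\,d\xi\Bigr)dM .
\]
Here $\xi\mapsto|\mathring A_{\xi}|^{2}$ is a quadratic form on the unit normal sphere whose trace is $|\mathring A|^{2}$; by convexity of $t\mapsto t^{n/2}$ its $L^{n/2}$-average over $S^{d-1}$ is maximal when the form has rank one, so $\int_{S^{d-1}}|\mathring A_{\xi}|^{n}\,d\xi\le\bigl(\int_{S^{d-1}}|\xi_{1}|^{n}\,d\xi\bigr)|\mathring A|^{n}$. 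The key computation — a Beta/Gamma identity — is then that
\[
\frac{1}{\operatorname{vol}(S^{n+d-1})}\int_{S^{d-1}}|\xi_{1}|^{n}\,d\xi=\frac{\Gamma\!\bigl(\tfrac{n+1}{2}\bigr)}{\pi^{(n+1)/2}},
\]
which is independent of $d$ (and the same cancellation happens for $c=1$ after integrating the $\sin^{n+d-1}\theta$ weight against $\operatorname{vol}(S^{n+d})$). Dividing by $\operatorname{vol}(S^{n+d-1})$ and using $\mu_{k}(v)\ge b_{k}(M;\mathbb Z_{2})$ yields $\sum_{k=1}^{n-1}b_{k}(M;\mathbb Z_{2})\le K_{n}\|\mathring A\|_{L^{n}}^{n}$ with $K_{n}=\bigl(2\sqrt{(n-1)/n}\bigr)^{n}\Gamma\!\bigl(\tfrac{n+1}{2}\bigr)/\pi^{(n+1)/2}$.

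Finally I would set $B_{n}=K_{n}^{-1/n}$. If $\|\mathring A\|_{L^{n}}<B_{n}$ then $\sum_{k=1}^{n-1}b_{k}(M;\mathbb Z_{2})<1$, so these Betti numbers vanish, and moreover $\sum_{k=1}^{n-1}\int\mu_{k}(v)\,dv<\operatorname{vol}(S^{n+d-1})$, so there is a set of $v$ of positive measure for which $h_{v}$ is a Morse function with no critical point of index in $\{1,\dots,n-1\}$. For such an $h_{v}$, components of the sublevel sets are created at index-$0$ critical points and can merge only at index-$1$ ones; since $M$ is connected this forces $\mu_{0}(h_{v})=1$, and applying the same argument to $-h_{v}$ gives $\mu_{n}(h_{v})=1$. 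Thus $h_{v}$ has exactly two critical points, and by Reeb's theorem $M$ is homeomorphic to $\mathbb S^{n}$.
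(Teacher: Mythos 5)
The paper never proves this statement: Theorem \ref{shiohama-xu} is quoted from Shiohama--Xu \cite{SX} as motivation for Conjecture \ref{conj}, so there is no internal proof to compare against. Judged on its own, your sketch is essentially sound, and it is in substance the classical Chern--Lashof/Morse-theoretic argument behind the cited result: bound the Lipschitz--Killing curvature on the strata of intermediate index by a multiple of $|\mathring A|^n$, integrate over the unit normal bundle via the area formula, and finish with the ``two critical points plus Reeb'' argument, which is exactly how the Chern--Lashof criterion (normalized total curvature less than $3$ implies homeomorphic to a sphere) is proved. The individual steps check out: the eigenvalue estimate at a critical point with eigenvalues of both signs is correct; the bound $\int_{S^{d-1}}|\mathring A_\xi|^n\,d\xi\le |\mathring A|^n\int_{S^{d-1}}|\xi_1|^n\,d\xi$ is valid because $Q\mapsto\int_{S^{d-1}}Q(\xi)^{n/2}d\xi$ is convex on the positive semidefinite forms of fixed trace, whose extreme points are the rank-one forms; and the ratio $\int_{S^{d-1}}|\xi_1|^n d\xi/\mathrm{vol}(S^{n+d-1})=\Gamma\bigl(\tfrac{n+1}{2}\bigr)\pi^{-(n+1)/2}$ is a correct Beta--Gamma computation and is indeed independent of $d$, which is precisely what makes $B_n$ depend only on $n$. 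The only vague spot is the case $c=1$: no $\sin^{n+d-1}\theta$ weight is needed, since under the umbilic inclusion $\mathbb{S}^{n+d}\subset\mathbb{R}^{n+d+1}$ the radial part of the second fundamental form is pure trace, so the traceless second fundamental form is unchanged (the same fact exploited in the proof of Lemma \ref{Sobolev-ineq} via $|\bar H|^2=|H|^2+n^2$); hence the Euclidean argument applies verbatim with codimension $d+1$, and the codimension-independence of your constant finishes it, with general $c>0$ handled by the scale invariance of $\|\mathring A\|_{L^n}$ you already noted. Two minor remarks: the weak Morse inequalities are not actually needed, since the positive-measure argument alone produces a Morse height function with exactly two critical points; and Reeb's theorem gives only a homeomorphism, which is consistent with the statement (and with why the diffeomorphism version is posed as Conjecture \ref{conj}).
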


In Theorem \ref{shiohama-xu}, $\mathring{A}$ is the traceless second
fundamental form of a submanifold.  Motivated by Theorem
\ref{shiohama-xu}, we proposed the following conjecture in
\cite{Xu-Zhao}.
\begin{conj}\label{conj}
Let $M$ be an $n$-dimensional $(n\geq2)$ smooth closed submanifold
in $F^{n+d}(c)$ with $c\geq0$. There is an positive constant $C_{n}$
depending only on $n$ such that if $||\mathring{A}||_{L^n}<C_{n},$
then $M$ is diffeomorphic to the standard $n$-sphere
$\mathbb{S}^{n}$.
\end{conj}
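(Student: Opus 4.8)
The plan is to run the mean curvature flow of $M$ and to control it throughout by the scale-invariant quantity $\|\mathring A\|_{L^n}$; after rescaling we may take $c=1$, so that $\mathbb{F}^{n+d}(1)=\mathbb{S}^{n+d}$, the case $c=0$ being the Euclidean analogue. Here $\mathring A$, not $A$, is the right quantity: the totally umbilical spheres $\mathbb{S}^n\!\big(n/\sqrt{|H|^2+n^2}\big)$ already have $\mathring A\equiv 0$ with $|H|$ arbitrarily large and simply shrink to a round point, so Theorem \ref{main-thm-1} — which assumes the \emph{supercritical} full-second-fundamental-form bound $\|A\|_{L^p}<C_{n,p}$ with $p>n$ — is only a first step toward the conjecture; one must pass to the critical exponent $p=n$ and to the traceless part. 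It is also worth recording that for $n\in\{2,3,5,6\}$ the conjecture already follows from Theorem \ref{shiohama-xu} together with the Poincar\'e conjecture and the absence of exotic $n$-spheres, so the genuine content lies in $n\ge 7$ (and, since the smooth $4$-dimensional Poincar\'e conjecture is open, in $n=4$).

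First I would establish a parabolic $\varepsilon$-regularity theorem \emph{at the critical exponent}: there should exist $\varepsilon_n>0$ such that if $\int_{M_0\cap B_\rho(x_0)}|\mathring A|^n<\varepsilon_n$ on a ball of radius $\rho$, then the flow stays smooth on a definite backward parabolic neighbourhood with a bound of the form $\rho^2\big(|A|^2-\tfrac{1}{n}|H|^2\big)\lesssim 1$ there. The ingredients are the Michael--Simon Sobolev inequality on submanifolds, the evolution equation $\partial_t|\mathring A|^2=\Delta|\mathring A|^2-2|\nabla\mathring A|^2+(\text{cubic and ambient terms})$, and Moser iteration, in the spirit of the $L^{n/2}$-curvature estimates for the harmonic map heat flow and the Ricci flow; the same ingredients should also show that $\|\mathring A\|_{L^n}$ remains $<C_n$ along the flow, the supercritical reaction terms being absorbed through the Sobolev inequality when $C_n$ is small. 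This should leave two scenarios. If the flow is smooth and immortal, a subsequential limit is a closed minimal submanifold of $\mathbb{S}^{n+d}$ still satisfying $\|\mathring A\|_{L^n}<C_n$, which a sharp integral gap theorem for minimal submanifolds forces to be totally geodesic; the flow then provides diffeomorphisms $M\cong M_t\cong\mathbb{S}^n$. If instead the flow becomes singular — necessarily of type I, if the pinching is preserved — Huisken's monotonicity formula yields after rescaling a self-shrinker whose traceless second fundamental form inherits the smallness and which must therefore be the round $\mathbb{S}^n$; then just before the singular time $M_t$ is a normal graph over a small round sphere, hence diffeomorphic to $\mathbb{S}^n$, and so is $M$.

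The main obstacle is precisely the criticality of the $L^n$ norm: there is no scaling slack, so curvature concentration must be genuinely ruled out rather than absorbed. Concretely, the hard points will be (a) making the $\varepsilon$-regularity quantitative and independent of the codimension $d$, since in higher codimension the structure of $\mathring A$ and of the normal curvature complicates the reaction terms and the blow-up analysis relative to Huisken's hypersurface case; (b) proving at the critical exponent the rigidity statement that $L^n$-smallness of $\mathring A$ excludes non-flat minimal cones, non-spherical self-shrinkers and cylindrical shrinkers, which is genuinely borderline; and (c) the endpoint $n=2$, where $\|\mathring A\|_{L^2}$ is a Willmore-type energy and the flow interacts with Gauss--Bonnet, so a separate argument (or simply the uniformization of surfaces together with Theorem \ref{shiohama-xu}) is likely needed. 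An alternative, purely intrinsic route would run the Ricci flow on $(M,F^*g)$: by the Gauss equation, $L^n$-pinching of $\mathring A$ translates into $L^{n/2}$-smallness of the deviation of the curvature operator from constancy, after which one would combine a Ricci-flow smoothing estimate with the Brendle--Schoen differentiable sphere theorem — but controlling the initial curvature only in $L^{n/2}$ rather than pointwise is, again, exactly the critical-exponent difficulty.
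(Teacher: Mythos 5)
You are trying to prove Conjecture \ref{conj}, but the paper does not prove this statement: it is posed as an open conjecture, and the paper's actual results (Theorem \ref{main-thm-1}, Corollary \ref{Coro:3}, Theorems \ref{thm:convergence-S^n+d} and \ref{thm:convergence-S^n+d_H}) only give a \emph{partial} result, with the supercritical hypothesis $||A||_{L^p}<C_{n,p}$, $p>n$, on the full second fundamental form, which is exactly what you correctly identify as insufficient for the conjecture. So there is no proof in the paper to compare yours against, and your submission itself is a research program rather than a proof: the three pillars it rests on --- a critical-exponent parabolic $\varepsilon$-regularity theorem, preservation of $||\mathring{A}||_{L^n}<C_n$ along the flow, and an $L^n$ gap/rigidity theorem for blow-up limits (minimal submanifolds, self-shrinkers, cylinders) --- are all stated in the conditional (``there should exist'', ``should also show''), and none is established. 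You also assume without justification that preserved pinching forces a type I singularity, and you invoke Huisken's monotonicity formula in the spherical ambient space, where it requires nontrivial modification.

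The concrete reason these gaps are genuine, and not just matters of exposition, is visible in the paper's own estimates: every step degenerates at the critical exponent. The Sobolev inequality of Lemma \ref{Sobolev-ineq} has a constant $C_{n,\alpha_0}=C_n^{\gamma_0+1}$ with $\gamma_0=\frac{n(\alpha_0+n-2)}{(n-2)(\alpha_0-n)}\to\infty$ as $\alpha_0\to n$; the $L^p$ preservation and Moser iteration in Lemmas \ref{lem:time>T-S^n+d}--\ref{lem:A^0-pointwiseffffff} produce constants containing $\epsilon^{\frac{p}{p-n}}$, $n^{\frac{n}{p-n}}$, $(2\Lambda)^{\frac{2p}{p-n}}$, all of which blow up as $p\to n$; and the extension criterion of Lemma \ref{lp-extension} requires $p>n$, with the remark after it showing by the shrinking umbilical spheres that $p>n$ is optimal for that statement. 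Hence the mechanism by which the paper absorbs the reaction terms (H\"older with exponent $\frac{p-n}{p}>0$ giving room for Young's inequality) simply does not exist at $p=n$, and your assertion that ``the same ingredients'' will preserve the critical norm and yield $\varepsilon$-regularity is precisely the missing idea, not a routine adaptation. Your reduction of the low-dimensional cases $n\in\{2,3,5,6\}$ to Theorem \ref{shiohama-xu} plus smoothing theory is a correct and worthwhile observation, but for general $n$ (and in particular $n=4$ and $n\ge 7$) the proposal leaves the conjecture where the paper leaves it: open.
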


 As a consequence of  Theorem \ref{main-thm-1},  we have the
following differentiable sphere theorem, which can be considered as
a partial solution to Conjecture \ref{conj}.

\begin{coro}\label{Coro:3}Let $M$ be an $n$-dimensional $(n\geq3)$ smooth closed
submanifold in the unit sphere $\mathbb{S}^{n+d}$.  For any positive
number $p\in(n,\infty)$, there exists an explicitly computable positive constant $C_{n,p}$
depending only on $n$ and $p$, such that if
\begin{eqnarray*}
||A||_{L^p}<C_{n,p},
\end{eqnarray*}
then $M$ is diffeomorphic to the standard $n$-sphere
$\mathbb{S}^{n}$.
\end{coro}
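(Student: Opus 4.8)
The plan is to deduce Corollary \ref{Coro:3} directly from Theorem \ref{main-thm-1} by running the mean curvature flow with $F$ as initial data, and taking $C_{n,p}$ to be the \emph{same} constant as in Theorem \ref{main-thm-1}. Since the hypothesis $||A||_{L^p}<C_{n,p}$ is then literally that of Theorem \ref{main-thm-1}, the solution $F_t:M\to\mathbb{S}^{n+d}$ falls into one of the two alternatives there: either $M_t$ shrinks to a round point in some finite time $T<\infty$, or $M_t$ exists for all $t\in[0,\infty)$ and converges to a totally geodesic sphere $\mathbb{S}^{n}\subset\mathbb{S}^{n+d}$ as $t\to\infty$. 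In both cases I want to conclude that the fixed underlying abstract manifold $M$, which does not change along the flow, is diffeomorphic to the standard sphere $\mathbb{S}^{n}$.

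Consider first the long-time case. Here, by the convergence statement established in the proof of Theorem \ref{main-thm-1}, the immersions $F_t$ converge smoothly to an immersion $F_\infty:M\to\mathbb{S}^{n+d}$ whose image is a totally geodesic $\mathbb{S}^{n}$. Since $M$ and $\mathbb{S}^{n}$ have the same dimension, $F_\infty$ is a local diffeomorphism onto $\mathbb{S}^{n}$, hence a covering map of the closed manifold $M$ onto $\mathbb{S}^{n}$; as $\mathbb{S}^{n}$ is simply connected for $n\geq 3$, $F_\infty$ is a diffeomorphism, so $M$ is diffeomorphic to $\mathbb{S}^{n}$.

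In the finite-time case I invoke the blow-up analysis underlying the ``round point'' conclusion of Theorem \ref{main-thm-1}. Performing the parabolic rescaling of the flow around the singular time $T$, the relevant pinching is preserved and the rescaled flow converges smoothly to a shrinking round sphere. Consequently, for $t$ sufficiently close to $T$, the rescaled submanifold is a normal graph of arbitrarily small $C^1$-norm over a round $\mathbb{S}^{n}$, and in particular $F_t:M\to\mathbb{S}^{n+d}$ is then an embedding with image diffeomorphic to $\mathbb{S}^{n}$. Since $M_t=F_t(M)$ is diffeomorphic to $M$ for every $t<T$, it follows that $M$ is diffeomorphic to $\mathbb{S}^{n}$. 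This finishes the proof in both cases.

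The only point requiring care is the finite-time case: one must know that the convergence after parabolic rescaling is smooth and strong enough to yield an honest diffeomorphism $M\cong\mathbb{S}^{n}$, rather than merely a homeomorphism or varifold-type convergence. This is, however, precisely the content of the singularity analysis carried out in the proof of Theorem \ref{main-thm-1}, so no new ingredient is needed, and Corollary \ref{Coro:3} follows at once from that theorem.
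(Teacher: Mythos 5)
Your proposal is correct and follows essentially the same route as the paper: the corollary is deduced directly from Theorem \ref{main-thm-1} (with the same constant $C_{n,p}$), noting that in either alternative the convergence — smooth convergence to a totally geodesic $\mathbb{S}^n$, or smooth convergence of the rescaled flow to a round sphere near the finite singular time, as provided by Baker's convergence theorem invoked in the proof of Theorem \ref{thm:convergence-S^n+d} — forces $M$ to be diffeomorphic to $\mathbb{S}^n$. The covering-map and graph-over-sphere details you supply are exactly the standard facts the paper leaves implicit.
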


At the end of this section, we would like to propose the following
conjecture which we will study by developing further the techniques
in this paper.

\begin{conj}\label{conj2}
Let $F:M\rightarrow \mathbb{R}^{n+d}$ be an $n$-dimensional
$(n\geq2)$ smooth closed submanifold in the Euclidean space with
codimension $d\geq1$. Let $F_t$ be the solution of the mean
curvature flow with $F$ as initial value. There exists a positive
constant $C_{n}$ depending only on $n$, such that if
\begin{eqnarray*}
||H||_{L^n}<n{\rm Vol}(\mathbb{S}^n)^{\frac{1}{n}}+C_{n},
\end{eqnarray*}
then $M_t$ converges to a round point in finite time. In particular,
$M$ is diffeomorphic to the standard $n$-sphere $\mathbb{S}^{n}$.
\end{conj}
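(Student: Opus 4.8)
The plan is to attack Conjecture \ref{conj2} by combining a sharp Michael--Simon Sobolev inequality with the mean curvature flow machinery developed in this paper and in \cite{LXYZ}. The hypothesis should be read as saying that $M$ is, in a scale-invariant integral sense, a small perturbation of a round sphere: the round sphere $\mathbb{S}^n(r)\subset\mathbb{R}^{n+1}$ has $||H||_{L^n}=n\,{\rm Vol}(\mathbb{S}^n)^{1/n}$ independently of $r$, so the conjectural threshold is exactly the round value, with a small allowance $C_n$. Accordingly one wants to establish two things: (i) the bound $||H||_{L^n}<n\,{\rm Vol}(\mathbb{S}^n)^{1/n}+C_n$ forces the scale-invariant pinching quantity $\int_M|\mathring{A}|^n\,d\mu$ (equivalently $\int_M(|A|^2-\tfrac1n|H|^2)^{n/2}\,d\mu$) to be as small as we like; and (ii) such an integral pinching is preserved and improved along the flow, driving the rescaled flow to a round sphere at a finite-time singularity.

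For (i), apply the Michael--Simon Sobolev inequality on $M$ to the constant function $f\equiv1$: one obtains $\int_M|H|\,d\mu\ge c_n\,{\rm Vol}(M)^{(n-1)/n}$, whence by H\"older's inequality $||H||_{L^n}\ge c_n$, and the round sphere both realizes this and makes H\"older an equality because $|H|$ is constant on it. Hence what is genuinely needed is the \emph{optimal} value $c_n=n\,{\rm Vol}(\mathbb{S}^n)^{1/n}$ together with a \emph{rigidity/stability} statement: if $||H||_{L^n}$ is within $C_n$ of $n\,{\rm Vol}(\mathbb{S}^n)^{1/n}$ then $M$ is $C^\infty$-close to a round sphere, and in particular $\int_M|\mathring{A}|^n\,d\mu<\varepsilon_n$. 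For $n=2$ this is essentially the classical Willmore inequality $\int|H|^2\ge16\pi$ together with its known stability; for $n\ge3$ one needs a sharp Sobolev inequality for submanifolds of arbitrary codimension and a quantitative form of its equality case.

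For (ii), one proceeds as in \cite{LXYZ}: using the evolution equations for $|\mathring{A}|^2$ and $|H|^2$ under \eqref{MCF} in the Euclidean ambient space, combined with the Michael--Simon Sobolev inequality on $M_t$ and an absorption argument, one shows that $\int_{M_t}|\mathring{A}|^n\,d\mu_t$ stays small for all $t$, and a Stampacchia or Moser iteration upgrades this to a pointwise pinching $|\mathring{A}|^2\le\delta(t)|H|^2$ with $\delta(t)\to0$. Since $M_0$ is closed, the flow has finite maximal time $T<\infty$; near $T$ the rescaled flow is asymptotically of codimension one and uniformly convex, so by Huisken's theorem \cite{H1,H2} (in the form used in \cite{Andrews-Baker,LXYZ}) the rescaled hypersurfaces converge smoothly to a round sphere, i.e. $M_t$ contracts to a round point. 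The differentiable sphere conclusion then follows from the convergence to a round point exactly as Corollary \ref{Coro:3} follows from Theorem \ref{main-thm-1}.

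The main obstacle is clearly step (i). With only a non-optimal Sobolev constant one gets $||H||_{L^n}\ge c_n$ for some $c_n<n\,{\rm Vol}(\mathbb{S}^n)^{1/n}$, which leaves a genuine gap between the hypothesis of the conjecture and any roundness conclusion: step (ii) already shows that the smallness $\int_M|\mathring{A}|^n\,d\mu<\varepsilon_n$ by itself forces convergence to a round point --- this is, up to the precise choice of pinching functional, essentially the content of \cite{LXYZ} --- but not that the stated $L^n$-bound on $H$ implies it. Establishing the sharp Sobolev inequality in this codimension-free form, together with a stability estimate precise enough to control $\int_M|\mathring{A}|^n\,d\mu$ in terms of $||H||_{L^n}-n\,{\rm Vol}(\mathbb{S}^n)^{1/n}$, and keeping explicit track of the resulting $C_n$, is where the real work lies.
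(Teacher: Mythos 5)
You should note first that the statement you were given is Conjecture \ref{conj2}, which the paper does not prove: it is explicitly proposed as an open problem (``which we will study by developing further the techniques in this paper''), so there is no proof in the paper to compare against, and the question is simply whether your proposal amounts to a proof. It does not, and you say as much yourself: your step (i) --- that $||H||_{L^n}$ within $C_n$ of the round value $n\,{\rm Vol}(\mathbb{S}^n)^{1/n}$ forces $\int_M|\mathring{A}|^n\,d\mu$ to be small --- is exactly the open content of the conjecture. What would be needed there is (a) the sharp Chen--Willmore/Michael--Simon-type lower bound for $||H||_{L^n}$ in \emph{arbitrary} codimension, and (b) a quantitative stability theorem for its equality case strong enough to control the traceless second fundamental form in $L^n$. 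Neither is available from the tools in this paper: Lemma \ref{Michael-Simon} has a far-from-sharp constant, and even where sharp-constant or stability results are known (e.g.\ the $n=2$ Willmore case), closeness of the energy to the minimum gives at best $W^{2,2}$- or measure-theoretic closeness to a round sphere, not the $L^n$ smallness of $\mathring{A}$ that your step (ii) takes as input; no argument is offered for why such an estimate should hold, and it is not a formal consequence of any inequality cited.

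There is also a second, more structural gap in step (ii), which you present as routine (``as in \cite{LXYZ}''). All of the convergence machinery in this paper and its companion runs strictly above the critical exponent: the Sobolev inequality of Lemma \ref{Sobolev-ineq} requires $\alpha>n$, the extension criterion of Lemma \ref{lp-extension} requires $p>n$ (and the remark after it shows this is optimal), and Theorems \ref{thm:convergence-S^n+d} and \ref{thm:convergence-S^n+d_H} both need an a priori $L^p$ bound with $p>n$ on $A$ or $H$ in order to run the absorption and Moser iteration. The hypothesis of Conjecture \ref{conj2} supplies only $||H||_{L^n}$, i.e.\ exactly the borderline case in which the $\epsilon$-absorption of the gradient term fails, so preserving smallness of $\int_{M_t}|\mathring{A}|^n$ and upgrading it to the pointwise pinching needed for the Andrews--Baker/Huisken convergence is not a known adaptation but a genuine additional difficulty (this scale-invariance of the hypothesis is presumably why the statement is left as a conjecture). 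In short, your proposal is a reasonable research program and correctly identifies where the difficulty lies, but both of its pillars --- the sharp inequality with $L^n(\mathring{A})$-stability and a critical-exponent version of the flow argument --- are missing, so the conjecture remains unproved by it.
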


When $n=2$ and $d=1$, Conjecture \ref{conj2} is closely related to
the well-known Willmore conjecture.

\section{Preliminaries}

\label{Pre} Let $F:M^{n}\rightarrow N^{n+d}$ be a smooth immersion
from an $n$-dimensional Riemannian manifold $M^n$ without boundary
to an $(n+d)$-dimensional Riemannian manifold $N^{n+d}$. We shall
make use of the following convention on the range of indices.
$$1\leq i,j,k,\cdots \leq n,\ \ 1\leq A,B,C, \cdots \leq n+d,\ \
and\ \ n+1\leq\alpha,\beta,\gamma, \cdots \leq n+d.$$

Choose a local orthonormal frame field $\{e_A\}$ in $N$ such that
$e_i$'s are tangent to $M$. Let $\{\omega_A\}$ be the dual frame
field of  $\{e_A\}$. The metric $g$ and the volume form $d\mu$ of
$M$ are $g=\sum_i \omega_i\otimes\omega_i$ and $d\mu
=\omega_1\wedge\cdots\wedge\omega_n$.

 For any $x\in M$, denoted by $N_xM$
the normal space of $M$ in $N$ at point $x$, which is  the
orthogonal complement of $T_xM$ in $F^{\ast}T_{F(x)}N$. Denote by
$\bar{\nabla}$ the Levi-Civita connection on $N$. The Riemannian
curvature tensor  $\bar{R}$  of $N$ is defined by
\begin{equation*}\bar{R}(U,V)W=-\bar{\nabla}_U\bar{\nabla}_VW+\bar{\nabla}_V\bar{\nabla}_UW+\bar{\nabla}_{[U,V]}W
\end{equation*}
for vector fields $U,V$ and $ W$ tangent to $N$. The induced
connection $\nabla$ on $M$ is defined by
\begin{equation*}\nabla_XY=(\bar{\nabla}_XY)^{\top}
\end{equation*} for $X,Y$ tangent to $M$, where $(\ )^\top$ denotes
tangential component. Let $R$ be the Riemannian curvature tensor of
$M$.

Given a normal vector field $\xi$ along $M$, the induced connection
$\nabla^\bot$ on the normal bundle is defined by
\begin{equation*}\nabla^\bot
_X\xi=(\bar{\nabla}_X\xi)^{\bot},\end{equation*} where $(\ )^{\bot}$
denotes the normal component. Let $R^\bot$ denote the normal
curvature tensor.

 The second fundamental form is defined to be
$$A(X,Y)=(\bar{\nabla}_XY)^\bot$$ as a section of the tensor bundle
$T^\ast M\otimes T^\ast M\otimes NM$, where $T^\ast M$ and $NM$ are
the cotangential bundle and  the normal bundle over $M$. The mean
curvature vector $H$ is the trace of the second fundamental form.

The first covariant derivative of $A$ is defined as
\begin{equation*}
(\widetilde{\nabla}_XA)(Y,Z)=\nabla^\bot_XA(Y,Z)-A(\nabla_XY,Z)-A(Y,\nabla_XZ),
\end{equation*}
where $\widetilde{\nabla}$ is the connection on $T^\ast M\otimes
T^\ast M\otimes NM$. Similarly, we can define the second covariant
derivative of $A$. Under the local orthonormal frame field,
 the components of $A$ and its first and second covariant
derivatives are
\begin{equation*}\begin{split}
h^\alpha_{ij}=&\langle
A(e_i,e_j), e_\alpha\rangle,\\
h^\alpha_{ijk}=&\langle (\widetilde{\nabla} _{e_k}A)(e_i,e_j),
e_\alpha\rangle,\\
h^\alpha_{ijkl}=&\langle (\widetilde{\nabla}
_{e_l}\widetilde{\nabla}_{e_k}A)(e_i,e_j), e_\alpha\rangle.
\end{split}
\end{equation*}
 Then $A$ and $H$ can be written as
\begin{eqnarray*}
A=\sum_{i,j,\alpha}h_{ij}^{\alpha}\omega_i\otimes\omega_j\otimes
e_\alpha, \ \ H=\sum_{i,\alpha}h_{ii}^{\alpha} e_\alpha=\sum_\alpha
H^{\alpha} e_\alpha.\end{eqnarray*}

The Laplacian of $A$ is defined by $\Delta
h^\alpha_{ij}=\sum_kh^{\alpha}_{ijkk}$.

We define the traceless second fundamental form $\mathring{A}$ by
$\mathring{A}=A-\frac{1}{n}g\otimes H$, whose components are
$\mathring{h}^\alpha_{ij}=h^\alpha_{ij}-\frac{1}{n}H^\alpha\delta_{ij}$.
Obviously, we have $\sum_i\mathring{h}^\alpha_{ii}=0$.

Now we recall evolution equations for some geometric quantities
associated with the evolving submanifold in the unit sphere
$\mathbb{S}^{n+d}$.
\begin{lem}[\cite{Andrews-Baker,Baker,WaM1}]\label{Lemma-evo} Along the mean curvature
flow (\ref{mcf}) where the ambient space is the unit sphere
$\mathbb{S}^{n+d}$,  we have
\begin{equation*}\begin{split}\frac{\partial}{\partial t}d\mu_t=&-H^2d\mu_t;\\
\frac{\partial}{\partial t}|A|^2=&\triangle|A|^2-2|\nabla
A|^2+2\sum_{\alpha,\beta}\Big(\sum_{i,j}h^\alpha_{ij}h^{\beta}_{ij}\Big)^2\\
&+2\sum_{i,j,\alpha,\beta}\Big[\sum_p\Big(h_{ip}^{\alpha}h_{jp}^{\beta}-h_{jp}^{\alpha}h_{ip}^{\beta}\Big)\Big]^2+4|H|^2-2n|A|^2 ;\\
\frac{\partial}{\partial t}|H|^2=&\triangle |H|^2-2|\nabla
H|^2+2\sum_{i,j}\Big(\sum_{\alpha}H^\alpha
h^{\alpha}_{ij}\Big)^2+2n|H|^2.
\end{split}
\end{equation*}
\end{lem}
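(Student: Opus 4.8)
The plan is to carry out the standard variational computation for the mean curvature flow, adapted to the ambient unit sphere $\mathbb{S}^{n+d}$, whose curvature tensor is that of constant sectional curvature one, i.e. $\bar{R}_{ABCD} = g_{AC}g_{BD} - g_{AD}g_{BC}$. First I would compute the variation of the induced metric: from $\frac{\partial}{\partial t}F = H$ and $\bar{\nabla}_{e_i}e_j = \nabla_{e_i}e_j + A(e_i,e_j)$ one finds $\frac{\partial}{\partial t}g_{ij} = -2\sum_\alpha H^\alpha h^\alpha_{ij}$. Differentiating $d\mu_t = \sqrt{\det(g_{ij})}\,dx$ and taking the trace gives $\frac{\partial}{\partial t}d\mu_t = \frac12 g^{ij}\frac{\partial}{\partial t}g_{ij}\,d\mu_t = -\sum_{i,\alpha}H^\alpha h^\alpha_{ii}\,d\mu_t = -H^2 d\mu_t$, the first identity. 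The same variation formula also yields the time derivatives of the Christoffel symbols $\Gamma^k_{ij}$ and of the normal connection coefficients, which are needed for the remaining two equations.

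The core step is the evolution of the second fundamental form regarded as a section of $T^\ast M\otimes T^\ast M\otimes NM$. Using $\frac{\partial}{\partial t}F = H$ and the definition $h^\alpha_{ij} = \langle\bar{\nabla}_{e_i}e_j,e_\alpha\rangle$, I would commute the time derivative past the two spatial derivatives; this produces second covariant derivatives of $H$, quadratic expressions in $A$, and terms involving $\bar{R}$ evaluated on $H$ and the frame vectors. One then converts $\nabla^2 H$ into $\Delta h^\alpha_{ij}$ by means of the Codazzi equation, which in a space form is simply $h^\alpha_{ijk} = h^\alpha_{ikj}$, together with the Ricci identities for interchanging covariant derivatives; the latter introduce the intrinsic curvature $R$ and the normal curvature $R^\bot$, which are eliminated using the Gauss equation $R_{ijkl} = (g_{ik}g_{jl}-g_{il}g_{jk}) + \sum_\alpha(h^\alpha_{ik}h^\alpha_{jl}-h^\alpha_{il}h^\alpha_{jk})$ and the Ricci equation $R^\bot_{\alpha\beta ij} = \sum_p(h^\alpha_{ip}h^\beta_{jp}-h^\alpha_{jp}h^\beta_{ip})$ (the ambient normal curvature of $\mathbb{S}^{n+d}$ vanishing). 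The outcome is a parabolic Simons-type identity $\frac{\partial}{\partial t}h^\alpha_{ij} = \Delta h^\alpha_{ij} + (\text{quadratic curvature terms}) + (\text{terms from }\bar{R})$, where the sphere terms are precisely those that will produce the constants $4$ and $2n$ below.

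Finally I would contract. Since the norm $|A|^2 = \sum_{i,j,\alpha}(h^\alpha_{ij})^2$ is formed with the evolving metric, $\frac{\partial}{\partial t}|A|^2 = 2\sum h^\alpha_{ij}\frac{\partial}{\partial t}h^\alpha_{ij} + (\text{terms from }\tfrac{\partial}{\partial t}g^{ij})$; inserting the Simons-type identity, the quadratic curvature terms assemble into $2\sum_{\alpha,\beta}\big(\sum_{i,j}h^\alpha_{ij}h^\beta_{ij}\big)^2$ and $2\sum_{i,j,\alpha,\beta}\big[\sum_p(h^\alpha_{ip}h^\beta_{jp}-h^\alpha_{jp}h^\beta_{ip})\big]^2$, the reaction terms coming from the metric variation cancel against part of the curvature terms, and the ambient-sphere contributions leave exactly $4|H|^2-2n|A|^2$; the term $-2|\nabla A|^2$ appears because $\Delta|A|^2 = 2\sum h^\alpha_{ij}\Delta h^\alpha_{ij} + 2|\nabla A|^2$. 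The computation of $\frac{\partial}{\partial t}|H|^2$ is the same scheme applied to $H^\alpha = \sum_i h^\alpha_{ii}$, where the sphere contributes $+2n|H|^2$ after contraction. The main obstacle is purely bookkeeping: correctly tracking every term generated when commuting one time derivative and two spatial covariant derivatives, and assigning the right signs to the ambient-curvature and normal-bundle-curvature terms; apart from that, the argument is identical to Huisken's hypersurface calculation and the higher-codimension versions of Wang and Andrews--Baker, which is why the lemma is attributed to \cite{Andrews-Baker,Baker,WaM1}.
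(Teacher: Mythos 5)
Your outline is the standard derivation carried out in the cited references: the paper itself offers no proof of this lemma, quoting it directly from Andrews--Baker, Baker and Wang, and your plan (metric variation, parabolic Simons identity via Gauss/Codazzi/Ricci in the constant-curvature ambient space, then contraction to get the $-2|\nabla A|^2$, quadratic, and sphere terms $4|H|^2-2n|A|^2$, $2n|H|^2$) is exactly the route those sources take. So the proposal is correct and essentially the same approach, modulo the bookkeeping you acknowledge but do not carry out.
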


From an inequality in \cite{Li-Li}, we have
\begin{equation*}
2\sum_{\alpha,\beta}\Big(\sum_{i,j}h^\alpha_{ij}h^{\beta}_{ij}\Big)^2
+2\sum_{i,j,\alpha,\beta}\Big[\sum_p\Big(h_{ip}^{\alpha}h_{jp}^{\beta}-h_{jp}^{\alpha}h_{ip}^{\beta}\Big)\Big]^2\leq
3|A|^4.
\end{equation*}
Then we have the following inequality.
\begin{equation}\label{evo-ineq-A}
\frac{\partial}{\partial t}|A|^2\leq\triangle|A|^2-2|\nabla
A|^2+3|A|^4+4|H|^2-2n|A|^2.
\end{equation}

By the Schwarz inequality, we have
\begin{equation*}\begin{split}
\sum_{i,j}\Big(\sum_{\alpha}H^\alpha h^{\alpha}_{ij}\Big)^2\leq&
\sum_{i,j}\Big(\sum_{\alpha}(H^\alpha)^2\Big)
\Big(\sum_{\alpha}(h^{\alpha}_{ij})^2\Big)\\
=&\sum_{\alpha}(H^\alpha)^2
\sum_{i,j,\alpha}(h^{\alpha}_{ij})^2=|A|^2|H|^2.
\end{split}
\end{equation*}
Hence
\begin{equation}\begin{split}\label{evo-ineq-H}
\frac{\partial}{\partial t}|H|^2
\leq&\triangle |H|^2-2|\nabla H|^2+2|A|^2|H|^2+2n|H|^2\\
=&\triangle |H|^2-2|\nabla
H|^2+2|\mathring{A}|^2|H|^2+\frac{2}{n}|H|^4+2n|H|^2.
\end{split}
\end{equation}

From Lemma \ref{Lemma-evo}, we have the evolution equation of
$|\mathring {A}|^2$.
\begin{equation}\begin{split}\label{4}
\frac{\partial}{\partial
t}|\mathring{A}|^2=&\triangle|\mathring{A}|^2-2|\nabla
\mathring{A}|^2+2\sum_{\alpha,\beta}\Big(\sum_{i,j}h^\alpha_{ij}h^{\beta}_{ij}\Big)^2\\
&+2\sum_{i,j,\alpha,\beta}\Big[\sum_p\Big(h_{ip}^{\alpha}h_{jp}^{\beta}-h_{jp}^{\alpha}h_{ip}^{\beta}\Big)\Big]^2
-\frac{2}{n}\sum_{i,j}\Big(\sum_{\alpha}H^\alpha
h^{\alpha}_{ij}\Big)^2\\
&-2n|\mathring{A}|^2.
\end{split}
\end{equation}

At the point where $H\neq0$, we choose $\{\nu_\alpha\}$ such that
$e_{n+1}=\frac{H}{|H|}$. Let
$A_H=\sum_{i,j}h_{ij}^{n+1}\omega_i\otimes\omega_j$. Set
$\mathring{A}_H=A_H-\frac{|H|}{n}{\rm Id}$  and
$|\mathring{A}_I|^2=|\mathring{A}|^2-|\mathring{A}_H|^2$. As in
\cite{Andrews-Baker}, we have the following estimate.
\begin{equation}\label{5}
\begin{split}
&2\sum_{\alpha,\beta}\Big(\sum_{i,j}h^\alpha_{ij}h^{\beta}_{ij}\Big)^2+2\sum_{i,j,\alpha,\beta}\Big[\sum_p\Big(h_{ip}^{\alpha}h_{jp}^{\beta}-h_{jp}^{\alpha}h_{ip}^{\beta}\Big)\Big]^2
-\frac{2}{n}\sum_{i,j}\Big(\sum_{\alpha}H^\alpha
h^{\alpha}_{ij}\Big)^2\\
&\leq
2|\mathring{A}_H|^4+\frac{2}{n}|\mathring{A}_H|^2|H|^2+8|\mathring{A}_H|^2|\mathring{A}_I|^2+3|\mathring{A}_I|^4\\
&\leq 2|A|^2|\mathring{A}|^2+11|\mathring{A}|^4.
\end{split}\end{equation} Combining (\ref{4}) and (\ref{5}) we obtain
\begin{equation}\label{evo-ineq-A_0}
\frac{\partial}{\partial
t}|\mathring{A}|^2\leq\triangle|\mathring{A}|^2-2|\nabla
\mathring{A}|^2+13|A|^2|\mathring{A}|^2.
\end{equation}
 Note that this inequality also holds at the point where $H=0$.

\section{A Sobolev inequality for submanifolds in a sphere}

 Firstly we recall the  well-know Michael-Simon inequality.

\begin{lem}[\cite{Michael-Simon}]\label{Michael-Simon}Let $M^n$ $(n\geq2)$ be a compact submanifold  with or without boundary
in the Euclidean space $\mathbb{R}^{n+d}$ with $d\geq1$. For a
nonnegative function  $h\in C^1(M)$ such that $h|_{\partial M}=0$ if
$\partial M\neq\emptyset$, we have
\begin{equation}\label{Michael-Simon-ineq}
\bigg(\int_Mh^{\frac{n}{n-1}}d\mu\bigg)^{\frac{n-1}{n}}\leq c_n
\int_M(|\nabla h|+|H|h)d\mu,
\end{equation}
where $c_n=4^{n+1}\sigma_n^{-1/n}$ and $\sigma_n$ is the volume of
the unit ball in $\mathbb{R}^{n}$.
\end{lem}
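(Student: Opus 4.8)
\medskip
\noindent\emph{Proof proposal.} This is the Michael--Simon Sobolev inequality, and I would recover it along the classical lines of \cite{Michael-Simon}, in three stages: a monotonicity / mean-value inequality coming from the first variation of area; an isoperimetric inequality for compact submanifolds with boundary, extracted from it by a covering argument; and the passage to the stated inequality by the coarea formula. (The sharp constant can be obtained instead by an ABP / optimal-transport argument on $M$; since the constant here is the original $4^{n+1}\sigma_n^{-1/n}$, I follow the route that produces that value.)

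\smallskip
\noindent\emph{Stage 1 (mean-value inequality).} The starting point is the first variation identity for submanifolds: for any $C^1$ ambient vector field $Y$ on $\mathbb{R}^{n+d}$,
\begin{equation*}
\int_M \operatorname{div}_M Y\, d\mu = -\int_M \langle Y, H\rangle\, d\mu + \int_{\partial M}\langle Y, \nu\rangle\, d\mathcal{H}^{n-1},
\end{equation*}
where $\operatorname{div}_M Y=\sum_i\langle \bar{\nabla}_{e_i}Y,e_i\rangle$ and $\nu$ is the outward unit conormal; it follows by splitting $Y=Y^{\top}+Y^{\perp}$, applying the divergence theorem on $M$ to the tangential part (which produces the boundary integral), and using $H=\sum_i(\bar{\nabla}_{e_i}e_i)^{\perp}$. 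Fixing $\xi$, writing $r=|x-\xi|$ and inserting the radial field $Y(x)=h(x)\gamma(r)(x-\xi)$ for a nonincreasing Lipschitz profile $\gamma$, one finds from $\operatorname{div}_M(x-\xi)=n$ and $\nabla^M r=(x-\xi)^{\top}/r$ that
\begin{equation*}
\operatorname{div}_M Y = r^{\,1-n}h\,\tfrac{d}{dr}\big(r^n\gamma(r)\big)-r^{-1}h\,\gamma'(r)\,|(x-\xi)^{\perp}|^2+\gamma(r)\,\langle \nabla^M h,(x-\xi)^{\top}\rangle ,
\end{equation*}
where the middle term is $\ge 0$ because $h\ge0$ and $\gamma'\le0$. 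Running this identity over the family of mollified profiles $\gamma=(\min\{r,\rho\})^{-n}$ and comparing two radii — the standard monotonicity computation, in which sending the small radius to $0$ contributes the concentration term $\sigma_n h(\xi)$ (at the smooth point $\xi\in M$) — and then using $|\langle\nabla^M h,(x-\xi)^{\top}\rangle|\le r|\nabla h|$ and $|\langle x-\xi,H\rangle|\le r|H|$, one obtains the mean-value inequality: for every $\xi\in M$ and $\rho>0$,
\begin{equation*}
\sigma_n\,h(\xi)\ \le\ \rho^{-n}\!\int_{M\cap B_\rho(\xi)}\! h\,d\mu\ +\ c\!\int_{M\cap B_\rho(\xi)}\!\frac{|\nabla h|+|H|\,h}{|y-\xi|^{\,n-1}}\,d\mu(y)\ +\ (\text{boundary term}) ,
\end{equation*}
the last term being $\propto\int_{\partial M\cap B_\rho(\xi)}|y-\xi|^{1-n}\,d\mathcal{H}^{n-1}$ and absent when $\partial M=\emptyset$ or $h|_{\partial M}=0$. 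Tracking the numerical factors through this computation is what eventually produces the constant $4^{n+1}\sigma_n^{-1/n}$.

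\smallskip
\noindent\emph{Stage 2 (isoperimetric inequality).} Applying the case $h\equiv 1$ to an arbitrary compact submanifold-with-boundary $W$, with $V=\mu(W)$, $P=\mathcal{H}^{n-1}(\partial W)$, $K=\int_W|H|\,d\mu$, the aim is
\begin{equation*}
V^{\frac{n-1}{n}}\ \le\ c_n\,(P+K) .
\end{equation*}
This is the crux. One fixes a scale $\rho_0\le(2V/\sigma_n)^{1/n}$ and splits $W$ according to whether the mass ratio $\rho^{-n}\mu(W\cap B_\rho(\cdot))$ stays above $\tfrac12\sigma_n$ for all $\rho\le\rho_0$ or drops below it at some scale; on the second set the mean-value inequality forces the curvature and boundary integrals at that scale to exceed $\tfrac12\sigma_n$, and a covering argument of Besicovitch/Vitali type, exploiting the resulting bounded overlap, sums these local lower bounds into the global estimate, the first set being controlled directly through $\rho_0$. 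I expect this covering step — together with the bookkeeping that assembles the stated constant and carries the boundary term correctly — to be the main obstacle.

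\smallskip
\noindent\emph{Stage 3 (Sobolev inequality).} It suffices to treat smooth $h$, the general $C^1$ case following by approximation. For $t>0$ let $W_t=\{h>t\}$; for a.e.\ $t$ this is a compact submanifold whose smooth boundary $\{h=t\}$ does not meet $\partial M$ (since $h|_{\partial M}=0$), so Stage 2 gives $\mu(W_t)^{(n-1)/n}\le c_n\big(\mathcal{H}^{n-1}(\{h=t\})+\int_{W_t}|H|\,d\mu\big)$. Integrating in $t$ and using the coarea formula $\int_0^\infty\mathcal{H}^{n-1}(\{h=t\})\,dt=\int_M|\nabla h|\,d\mu$, Fubini $\int_0^\infty\!\int_{W_t}|H|\,d\mu\,dt=\int_M|H|\,h\,d\mu$, and Minkowski's integral inequality
\begin{equation*}
\Big(\int_M h^{\frac{n}{n-1}}\,d\mu\Big)^{\frac{n-1}{n}}=\Big\|\int_0^\infty\mathbf{1}_{\{h>t\}}\,dt\Big\|_{L^{\frac{n}{n-1}}(M)}\le\int_0^\infty\mu(\{h>t\})^{\frac{n-1}{n}}\,dt ,
\end{equation*}
one chains the three bounds to obtain $\|h\|_{L^{n/(n-1)}(M)}\le c_n\int_M(|\nabla h|+|H|\,h)\,d\mu$, which is the assertion.
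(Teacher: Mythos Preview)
The paper does not give its own proof of this lemma: it is stated with the citation \cite{Michael-Simon} and used as a black box. So there is nothing in the paper to compare against beyond the reference itself.

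Your three-stage outline (first-variation monotonicity $\Rightarrow$ isoperimetric inequality via a covering argument $\Rightarrow$ Sobolev inequality via coarea and Minkowski) is precisely the route taken in the original Michael--Simon paper, and your identification of the covering step in Stage~2 as the crux is accurate. The sketch is correct in structure; the only places where real work hides are (i) the careful tracking of constants through the monotonicity computation and the Besicovitch-type covering to land on $4^{n+1}\sigma_n^{-1/n}$, and (ii) making sure the boundary term in Stage~1 really drops out under the hypothesis $h|_{\partial M}=0$ when you pass to the sublevel sets $W_t$ in Stage~3 --- both of which you flag but do not carry out. As a proof \emph{proposal} this is on target; as a proof it would need those details filled in, which is essentially rewriting the cited paper.
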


An improvement of the constant $c_n$ in Lemma \ref{Michael-Simon}
was given in \cite{Otsuki}. We derive a Sobolev type inequality in a
proper form, which will be used in the proof of our theorems.

\begin{lem}\label{Sobolev-ineq}
Let $M$ be an $n$-dimensional $(n\geq3)$ closed submanifold in
${\mathbb{S}}^{n + d}$. Then for all Lipschitz functions $v$ on $M$
and all $\alpha\geq \alpha_0>n$, we have
\begin{equation*}
 ||{v}||^2_{L^{\frac{2n}{n-2}}(M)}\leq C_{n,\alpha_0}\left(||{\nabla v}||^2_{L^{2}(M)} +
 \Big(1+||{H}||^{\frac{2\alpha}{\alpha-n}}_{L^\alpha(M)}\Big)||{v}||^2_{L^{2}(M)}\right),
\end{equation*}
where $C_{n,\alpha_0}$ is a positive constant depending only on $n$
and $\alpha_0$.
\end{lem}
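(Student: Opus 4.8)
The plan is to deduce the stated inequality from the Euclidean Michael--Simon inequality (Lemma~\ref{Michael-Simon}), applied to $M$ regarded as a closed submanifold of $\mathbb{R}^{n+d+1}\supset\mathbb{S}^{n+d}$, and then to upgrade the resulting $W^{1,1}\hookrightarrow L^{n/(n-1)}$ estimate to a $W^{1,2}\hookrightarrow L^{2n/(n-2)}$ estimate by inserting a suitable power of $v$ and absorbing the lower order terms by interpolation. To this end, let $x$ be the position vector of $\mathbb{S}^{n+d}$, which is its outward unit normal; for $X,Y$ tangent to $M$ the Euclidean covariant derivative of $Y$ along $X$ is $\nabla^M_XY+A(X,Y)-\langle X,Y\rangle x$, so the mean curvature vector $\bar H$ of $M$ in $\mathbb{R}^{n+d+1}$ equals $H-nx$, and since $H$ is tangent to $\mathbb{S}^{n+d}$ we get $|\bar H|=\sqrt{|H|^2+n^2}\le|H|+n$. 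We may assume $v\ge0$, and by approximation that $v$ is smooth. Put $q=\frac{2(n-1)}{n-2}$ and apply Lemma~\ref{Michael-Simon} with $h=v^{q}$: since $h^{n/(n-1)}=v^{2n/(n-2)}$, $q-1=\frac{n}{n-2}$, and $|\nabla h|=q\,v^{n/(n-2)}|\nabla v|$, the Cauchy--Schwarz inequality yields
\begin{equation*}
||v||_{L^{2n/(n-2)}}^{q}\le c_nq\,||v||_{L^{2n/(n-2)}}^{n/(n-2)}\,||\nabla v||_{L^2}+c_n\int_M(|H|+n)\,v^{q}\,d\mu .
\end{equation*}

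Next I would absorb the two terms on the right that are not yet of the desired form. Write $S=||v||_{L^{2n/(n-2)}}$, $G=||\nabla v||_{L^2}$, $V=||v||_{L^2}$, $N=||H||_{L^\alpha}$. Because $q>\frac{n}{n-2}$ and $q-\frac{n}{n-2}=1$, Young's inequality gives $c_nq\,S^{n/(n-2)}G\le\varepsilon S^{q}+C(n,\varepsilon)G^{q}$; because $2<q<\frac{2n}{n-2}$, interpolating $||v||_{L^q}$ between $V$ and $S$ and applying Young gives $c_nn\int_Mv^q\,d\mu\le\varepsilon S^q+C(n,\varepsilon)V^q$. For the remaining term, Hölder's inequality gives $\int_M|H|v^q\,d\mu\le N\,||v||_{L^r}^q$ with $r=\frac{q\alpha}{\alpha-1}$, and one checks $2<r<\frac{2n}{n-2}$ whenever $\alpha>n$, so interpolation gives $||v||_{L^r}\le V^{1-\theta}S^{\theta}$ with $\frac1r=\frac{1-\theta}{2}+\frac{\theta(n-2)}{2n}$; a short computation then shows $q(1-\theta)=1-\frac{n}{\alpha}=\frac{\alpha-n}{\alpha}$. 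Since $\theta q<q$, Young's inequality yields $c_nN\,||v||_{L^r}^q\le\varepsilon S^q+C\,N^{1/(1-\theta)}V^q$. Choosing $\varepsilon$ small and combining the three estimates, $S^q\le C\big(G^q+(1+N^{1/(1-\theta)})V^q\big)$; taking $q$-th roots (using $(a+b)^{1/q}\le a^{1/q}+b^{1/q}$), squaring, and using $(1+t)^{2/q}\le1+t^{2/q}$ (valid since $\frac2q=\frac{n-2}{n-1}\le1$), we obtain
\begin{equation*}
S^{2}\le C\Big(G^{2}+\big(1+N^{2/(q(1-\theta))}\big)V^{2}\Big)=C\Big(G^{2}+\big(1+N^{2\alpha/(\alpha-n)}\big)V^{2}\Big),
\end{equation*}
which is the assertion, provided $C$ depends only on $n$ and $\alpha_0$.

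The main obstacle is precisely this uniformity of the constant. Every constant above is elementary except the Young constant in the estimate of the $|H|$-term, which depends on the interpolation exponent $\theta=\theta(\alpha)$, and $\theta\uparrow1$ as $\alpha\downarrow n$, so that constant degenerates. The hypothesis $\alpha\ge\alpha_0>n$ enters exactly here: it confines $\theta$ to a compact subinterval of $(0,1)$ with endpoints depending only on $n$ and $\alpha_0$, so all constants in the argument are controlled by $n$ and $\alpha_0$ alone. A secondary, purely computational point is to verify both that $r=\frac{q\alpha}{\alpha-1}$ lies strictly inside $(2,\frac{2n}{n-2})$, so the interpolation is legitimate, and that the power of $||H||_{L^\alpha}$ collapses to exactly $\frac{2\alpha}{\alpha-n}$, the latter being forced by the identity $q(1-\theta)=\frac{\alpha-n}{\alpha}$ together with the choice $q=\frac{2(n-1)}{n-2}$.
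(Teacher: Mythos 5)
Your proposal is correct and follows essentially the same route as the paper: apply the Michael--Simon inequality to $M\subset\mathbb{R}^{n+d+1}$ with $|\bar H|\le n+|H|$, substitute $h=v^{2(n-1)/(n-2)}$, and absorb the lower-order terms by H\"older, interpolation between $L^2$ and $L^{2n/(n-2)}$, and Young, with the restriction $\alpha\ge\alpha_0>n$ keeping the interpolation exponent away from its degenerate value so the constant depends only on $n$ and $\alpha_0$. The only cosmetic difference is that you handle the constant term $n\int v^q$ by direct interpolation, whereas the paper introduces an auxiliary exponent $\alpha'$ and lets $\alpha'\to\infty$, and you use multiplicative interpolation plus Young where the paper uses the additive $\varepsilon$-form; the exponent bookkeeping $\tfrac{2}{q(1-\theta)}=\tfrac{2\alpha}{\alpha-n}$ matches the paper's $\tfrac{(n-2)(1+\gamma)}{n-1}=\tfrac{2\alpha}{\alpha-n}$.
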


\begin{proof}   Since a Lipschitz function is differentiable almost
everywhere, we only have to prove the lemma for  $v\in C^1(M)$ and
$v\geq0$. We consider the composition of isometric immersions $M^{n}
\rightarrow \mathbb{S}^{n+d}\subset \mathbb{R}^{n+d+1}$. Denote by
$\bar{H}$ the mean curvature vector of $M$ as a submanifold in
$\mathbb{R}^{n+d+1}$. Then we have $\bar{|H|}^2=|H|^2+n^2$. By Lemma
\ref{Michael-Simon}, we have for any nonnegative function $h\in
C^1(M)$,
\begin{equation}\label{sobo1}
\begin{split}
\bigg(\int_Mh^{\frac{n}{n-1}}d\mu\bigg)^{\frac{n-1}{n}}
\leq& c_n \int_M(|\nabla h|+|\bar{H}|h)d\mu\\
\leq& c_n \int_M(|\nabla h|+(n+|{H}|)h)d\mu.
\end{split}
\end{equation}

Let $h=v^{\frac{2(n-1)}{n-2}}$ in (\ref{sobo1}), we have

\begin{equation*}
\bigg(\int_M v^{\frac{2n}{n-2}}d\mu\bigg)^{\frac{n-1}{n}} \leq
c_n\bigg( \int_M |\nabla v|v^{\frac{n}{n-2}}d\mu+
\int_M(n+|{H}|)v^{\frac{2(n-1)}{n-2}} d\mu \bigg).
\end{equation*}

Denote by $V$ the volume of $M$ and let $Q=\frac{n}{n-2}$. By the
H\"{o}lder inequality, we have for $\alpha,\alpha'\geq \alpha_0>n$

\begin{equation*}\begin{split}
\bigg(\int_M v^{\frac{2n}{n-2}}d\mu\bigg)^{\frac{n-2}{n}}
\leq&c_n^{\frac{n-2}{n-1}}\bigg( \int_M |\nabla
v|v^{\frac{n}{n-2}}d\mu+
\int_M(n+|{H}|)v^{\frac{2(n-1)}{n-2}} d\mu \bigg)^{\frac{n-2}{n-1}}\\
\leq& \tilde{c}_n\bigg(||\nabla
v||_{L^2(M)}||v||_{L^{2Q}(M)}^{\frac{n}{n-2}}+||H||_{L^\alpha(M)}||v||_{L^{2m}(M)}^{\frac{2(n-1)}{n-2}}\\
&+V^{\frac{1}{\alpha'}}||v||_{L^{2m'}(M)}^{\frac{2(n-1)}{n-2}}\bigg)^{\frac{n-2}{n-1}}\\
 \leq& \tilde{c}_n\bigg(||\nabla
v||_{L^2(M)}^{\frac{n-2}{n-1}}
||v||_{L^{2Q}(M)}^{\frac{n}{n-1}}+||H||_{L^\alpha(M)}^{\frac{n-2}{n-1}} ||v||_{L^{2m}(M)}^2\\
&+V^{\frac{1}{\alpha'}\cdot \frac{n-2}{n-1}}||v||_{L^{2m'}(M)}^{2}
 \bigg).
 \end{split}
\end{equation*}
Here $\tilde{c}_n=n^{\frac{n-2}{n-1}}c_n^{\frac{n-2}{n-1}}$,
$m=\frac{(n-1)\alpha}{(n-2)(\alpha-1)}<\frac{n}{n-2}=Q$ and
$m'=\frac{(n-1)\alpha'}{(n-2)(\alpha'-1)}<\frac{n}{n-2}=Q$. Hence
\begin{equation}\begin{split}\label{ineq-1}
||v||_{L^{2Q}(M)}^2\leq& \tilde{c}_n\bigg(||\nabla
v||_{L^{2}(M)}^{\frac{n-2}{n-1}}||v||_{L^{2Q}(M)}^{\frac{n}{n-1}}\\
&+
||H||_{L^{\alpha}(M)}^{\frac{n-2}{n-1}}||v||_{L^{2m}(M)}^2+V^{\frac{1}{\alpha'}\cdot
\frac{n-2}{n-1} }||v||_{L^{2m'}(M)}^{2}
 \bigg).\end{split}
\end{equation}

By using Young's inequality $$ab\leq \varepsilon
a^p+\varepsilon^{-\frac{q}{p}}b^q,$$ for $a,\ b,\ \varepsilon>0$,
$p,q>1$ and $\frac{1}{p}+\frac{1}{q}=1$, with
$$a=||v||_{L^{2Q}(M)}^{\frac{n}{n-1}},\ b=||\nabla
v||_{L^{2}(M)}^{\frac{n-2}{n-1}},\
\varepsilon=\frac{1}{2\tilde{c}_n},\ p=\frac{2(n-1)}{n},\
q=\frac{2(n-1)}{n-2},$$ we obtain from (\ref{ineq-1})
\begin{equation*}\begin{split}
||v||_{L^{2Q}(M)}^2\leq&
\tilde{c}_n\bigg(\frac{1}{2\tilde{c}_n}||v||_{L^{2Q}(M)}^2+\left(\frac{1}{2\tilde{c}_n}\right)^{-\frac{n}{n-2}}||\nabla
v||_{L^{2}(M)}^2\nonumber\\
&+||H||_{L^{\alpha}(M)}^{\frac{n-2}{n-1}}||v||_{L^{2m}(M)}^2
+V^{\frac{1}{\alpha'}\cdot \frac{n-2}{n-1}
}||v||_{L^{2m'}(M)}^{2}\bigg).\end{split}
\end{equation*}
This implies
\begin{equation}\label{ineq-2}
\begin{split}
||v||_{L^{2Q}(M)}^2\leq& \hat{c}_n\bigg(||\nabla v||_{L^{2}(M)}^2\\
&+ ||H||_{L^{\alpha}(M)}^{\frac{n-2}{n-1}}||v||_{L^{2m}(M)}^2
+V^{\frac{1}{\alpha'}\cdot
\frac{n-2}{n-1}}||v||_{L^{2m'}(M)}^{2}\bigg).
\end{split}
\end{equation}
Here $\hat{c}_n=(2\tilde{c}_n)^{\frac{2(n-1)}{n-2}}$. Recall the
interpolation inequality
$$||u||_{L^r(M)}\leq \varepsilon
||u||_{L^s(M)}+\varepsilon^{-\mu}||u||_{L^t(M)},$$ where $t<r<s$,
$\mu=\frac{\frac{1}{t}-\frac{1}{r}}{\frac{1}{r}-\frac{1}{s}}$. Since
$1<m<Q$, $1<m'<Q$, we have
\begin{equation*}\begin{split}
&||v||_{L^{2m}(M)}\leq \varepsilon
||v||_{L^{2Q}(M)}+\varepsilon^{-\gamma}||v||_{L^2(M)},\\
&||v||_{L^{2m'}(M)}\leq \varepsilon'
||v||_{L^{2Q}(M)}+\varepsilon'^{-\gamma'}||v||_{L^2(M)}.
\end{split}
\end{equation*}
Here $\varepsilon,\varepsilon'>0$  to be determined,
$\gamma=\frac{Q(m-1)}{Q-m}$ and $\gamma'=\frac{Q(m'-1)}{Q-m'}$. So
we obtain from (\ref{ineq-2})
\begin{equation}\begin{split}\label{ineq-3}
||v||_{L^{2Q}(M)}^2 \leq& \hat{c}_n||\nabla
v||_{L^2(M)}^2+\hat{c}_n||H||_{L^\alpha(M)}^{\frac{n-2}{n-1}}\bigg(\varepsilon
||v||_{L^{2Q}(M)}+\varepsilon^{-\gamma}||v||_{L^2(M)}\bigg)^2\\
&+ \hat{c}_nV^{\frac{1}{\alpha'}\cdot
\frac{n-2}{n-1}}\bigg(\varepsilon'
||v||_{L^{2Q}(M)}+\varepsilon^{-\gamma'}||v||_{L^2(M)}\bigg)^2\\
\leq&\hat{c}_n||\nabla
v||_{L^2(M)}^2+\hat{c}_n||H||_{L^\alpha(M)}^{\frac{n-2}{n-1}}\bigg(\varepsilon^2
||v||_{L^{2Q}(M)}^2+\varepsilon^{-2\gamma}||v||_{L^2(M)}^2\bigg)\\
&+\hat{c}_nV^{\frac{1}{\alpha'}\cdot
\frac{n-2}{n-1}}\bigg(\varepsilon'^2
||v||_{L^{2Q}(M)}^2+\varepsilon'^{-2\gamma'}||v||_{L^2(M)}^2\bigg).
\end{split}\end{equation}
Now set
$\varepsilon^2=\frac{1}{4\hat{c}_n}||H||_{L^\alpha(M)}^{-\frac{n-2}{n-1}}$
and $\varepsilon'^2=\frac{1}{4\hat{c}_n}V^{-\frac{1}{\alpha'}\cdot
\frac{n-2}{n-1}}$. Then from (\ref{ineq-3}) we have
\begin{equation}\label{111}
\begin{split}
||v||_{L^{2Q}(M)}^2 \leq& C_n||\nabla
v||_{L^2(M)}^2+C_n^{\gamma+1}||H||_{L^\alpha(M)}^{\frac{(n-2)(1+\gamma)}{n-1}}||v||_{L^2(M)}^2\\
&+C_n^{\gamma'+1}V^{\frac{(n-2)(1+\gamma')}{(n-1)\alpha'}}||v||_{L^2(M)}^2.
\end{split}\end{equation}
Here $C_n=4\hat{c}_n\geq 1$. Notice that $C_n^{\gamma+1}$ and
$C_n^{\gamma'+1}$ are decreasing functions with respect to $\alpha$
and $\alpha'$, respectively. Then we have $C_n^{\gamma+1}\leq
C_n^{\gamma_0+1}$ and $C_n^{\gamma'+1}\leq C_n^{\gamma_0+1}$, where
$\gamma_0=\frac{n(\alpha_0+n-2)}{(n-2)(\alpha_0-n)}$ . Set
$C_{n,\alpha_0}= C_n^{\gamma_0+1}$. Letting
$\alpha'\rightarrow+\infty$, we obtain from (\ref{111})
\begin{equation}\label{112}
{\ }{\ }{\ }||v||_{L^{2Q}(M)}^2 \leq C_{n,\alpha_0}||\nabla
v||_{L^2(M)}^2+C_{n,\alpha_0}\Big(1+||H||_{L^\alpha(M)}^{\frac{(n-2)(1+\gamma)}{n-1}}\Big)||v||_{L^2(M)}^2.
\end{equation}
Substituting  $\gamma=\frac{Q(m-1)}{Q-m}$ and
$m=\frac{(n-1)\alpha}{(n-2)(\alpha-1)}$ into (\ref{112}), we obtain
\begin{equation*}
 ||{v}||^2_{L^{2Q}(M)}\leq C_{n,\alpha_0}\left(||{\nabla v}||^2_{L^{2}(M)} +
 \Big(1+||{H}||^{\frac{2\alpha}{\alpha-n}}_{L^\alpha(M)}\Big)||{v}||^2_{L^{2}(M)}\right).
\end{equation*}
This completes the proof of Lemma \ref{Sobolev-ineq}.
\end{proof}

\section{The extension of the mean curvature flow}

In this section we investigate the extension of the mean curvature
flow under finite integral curvature condition. Huisken \cite{H1,H2}
and Wang \cite{WaM1} showed that if the second fundamental form is
uniformly bounded in $[0,T)$, then the solution can be extended to
$[0,T+\omega)$ for some $\omega>0$. In
\cite{Le-Sesum,LXYZ,XYZ1,XYZ2}, the integral condition conditions
that assure the extension of the mean curvature flow were
investigated, respectively.

Now we prove another integral condition sufficiently strong to
extend the mean curvature flow. Recall that a Riemannian manifold is
said to have bounded geometry if (i) the sectional curvature is
bounded; (ii) the injective radius is bounded from below by a
positive constant.

\begin{lem}\label{lp-extension}
Let $F:M\times [0,T)\rightarrow N$ be a mean curvature flow solution
with compact initial value on a finite time interval $[0,T)$, where
$N$ has bounded geomtry. If $\int_{M_t}|A|^pd\mu_t$ is bounded in
$[0,T)$ for some $n< p<+\infty$, then the solution can be extended
to $[0,T+\omega)$ for some $\omega>0$.
\end{lem}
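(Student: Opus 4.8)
The plan is to argue by contradiction with the well-known continuation criterion: if $T$ were a singular time, then $\sup_{M_t}|A|^2 \to \infty$ as $t \to T$, since $N$ has bounded geometry and the standard results of Huisken and Wang extend the flow past any time at which $|A|$ stays bounded. So it suffices to derive a uniform sup bound on $|A|$ from the hypothesis that $\int_{M_t}|A|^p\,d\mu_t \le \Lambda$ for all $t \in [0,T)$ with $p > n$. First I would record the evolution inequality for $|A|^2$: from Lemma~\ref{Lemma-evo} and the algebraic estimate of Li--Li quoted in the excerpt, together with $|H|^2 \le n|A|^2$, one gets $\frac{\partial}{\partial t}|A|^2 \le \Delta|A|^2 - 2|\nabla A|^2 + c_1|A|^4 + c_2|A|^2$ for constants $c_1,c_2$ depending only on $n$ (here $c_1 = 3$, $c_2 = 4n$ suffice, but the precise values are irrelevant). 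Since the ambient curvature and injectivity radius are bounded, all the ambient-curvature terms that would appear for a general $N$ are absorbed into the $c_2|A|^2$ term, so the same form of the inequality holds.

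The heart of the argument is a Moser-type iteration on $f := |A|^2$ to upgrade the $L^{p/2}$-in-space, $L^\infty$-in-time bound on $f$ to an $L^\infty$ bound in both variables. The essential input is a Sobolev inequality on $M_t$ with controlled constant: Lemma~\ref{Sobolev-ineq} gives, for any $\alpha_0 > n$,
\begin{equation*}
\|v\|^2_{L^{2n/(n-2)}(M_t)} \le C_{n,\alpha_0}\Big(\|\nabla v\|^2_{L^2(M_t)} + \big(1 + \|H\|^{2\alpha/(\alpha-n)}_{L^\alpha(M_t)}\big)\|v\|^2_{L^2(M_t)}\Big).
\end{equation*}
Taking $\alpha = p > n$ and using $|H| \le \sqrt{n}\,|A|$ bounds $\|H\|_{L^p(M_t)}$ by $\sqrt{n}\,\Lambda^{1/p}$, so the Sobolev constant along the flow is uniformly bounded by a constant $C = C(n,p,\Lambda)$. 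One also needs the volumes $\mathrm{Vol}(M_t)$ to be uniformly bounded: this follows from $\frac{\partial}{\partial t}d\mu_t = -H^2 d\mu_t \le 0$, so $\mathrm{Vol}(M_t) \le \mathrm{Vol}(M_0)$ monotonically. With a uniform Sobolev inequality, a uniform volume bound, and a uniform $L^{p/2}$ bound on $f$ (which is $\ge n/2 > 1$), the standard parabolic Moser iteration — test the evolution inequality for $f$ against powers $f^{q}$, integrate by parts, drop the good gradient term, apply Sobolev, and iterate $q \to \infty$ — yields $\sup_{M_t} f \le C(n,p,\Lambda,\mathrm{Vol}(M_0),T)$ for all $t$ bounded away from $0$; combined with smoothness on compact time subintervals this gives a uniform bound on $[0,T)$.

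The main obstacle, and the place requiring genuine care rather than routine computation, is running the Moser iteration \emph{on a time-dependent domain}: the measure $d\mu_t$ and the Sobolev constant both vary with $t$, so one must track how the $L^2(M_t)$-in-space norms of the iterated powers evolve, keeping the $-H^2 d\mu_t$ term (which has a favorable sign in $\frac{\partial}{\partial t}d\mu_t$) and controlling the reaction terms $c_1 f^2$ uniformly. The natural device is to work with the space-time quantity $\int_0^t \int_{M_s} f^{q}\,d\mu_s\,ds$ and use a Stampacchia-type or De Giorgi iteration to get $L^\infty$ from the $L^{p/2}$ hypothesis, exactly in the spirit of the extension results of Le--Sesum and Xu--Ye--Zhao cited in the section introduction; the hypothesis $p > n$ is precisely what makes the iteration exponents work, because it guarantees the initial integrability exceeds the critical Sobolev threshold. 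Once $|A|$ is bounded on $[0,T)$, the continuation theorem for flows in manifolds of bounded geometry finishes the proof, producing the desired $\omega > 0$.
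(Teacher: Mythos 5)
Your strategy is genuinely different from the paper's. The paper does not attempt an a priori sup bound at all: it argues by contradiction with a blow-up. Assuming $T<\infty$ is maximal, it picks $(x^{(i)},t^{(i)})$ realizing $Q^{(i)}=\max|A|^2\to\infty$, rescales the flow by $Q^{(i)}$ so that $|A^{(i)}|\le 1$, invokes the compactness theorem of Chen--He to extract a limiting flow $\overline F(t):\overline M\to\mathbb{R}^{n+d}$ with $|\overline A|(\overline x,0)=1$, and then uses the scaling of the $L^p$ norm: the rescaled integral over a unit ball equals $(Q^{(i)})^{-(p-n)/2}\int|A|^p\,d\mu\to 0$ precisely because $p>n$ and the integral is bounded, forcing $|\overline A|\equiv 0$ near $\overline x$ --- a contradiction. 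This argument uses no Sobolev inequality and works verbatim for an ambient $N$ of bounded geometry, and it isolates the role of $p>n$ as supercriticality of the norm under parabolic rescaling.

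The genuine gap in your proposal is that it does not prove the lemma in the generality in which it is stated. You run a parabolic Moser iteration whose essential input is the uniform Sobolev inequality of Lemma \ref{Sobolev-ineq}; but that lemma is proved via the isometric embedding $\mathbb{S}^{n+d}\subset\mathbb{R}^{n+d+1}$ and Michael--Simon, so it applies only to submanifolds of the sphere (or Euclidean space), whereas Lemma \ref{lp-extension} concerns an arbitrary ambient $N$ with bounded geometry. For general $N$ you would need a Hoffman--Spruck type inequality, which carries a smallness condition on the volume of the support of the test function relative to the ambient curvature bound and injectivity radius, so the ``uniformly controlled Sobolev constant along the flow'' is not automatic and would require an additional localization argument. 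A second, smaller, issue is your claim that all ambient-curvature terms in the evolution of $|A|^2$ are absorbed into $c_2|A|^2$: for general $N$ this evolution contains a term of type $\bar\nabla\bar R\ast A$, and the stated bounded-geometry hypothesis (curvature bound plus injectivity radius bound) gives no control of $\bar\nabla\bar R$, so the absorption needs justification. In the sphere case actually used later in the paper your scheme is viable --- it is exactly the iteration machinery of Lemmas \ref{lem:time>T-S^n+d}--\ref{lem:A^0-pointwiseffffff}, and your observation that $p>n$ makes the potential $|A|^2\in L^\infty_tL^{p/2}_x$ subcritical is the right one --- but as a proof of Lemma \ref{lp-extension} as stated it is incomplete, while the paper's rescaling argument avoids both difficulties.
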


\begin{proof} We argue by
contradiction.

Suppose that $T(<+\infty)$ is the maximal existence time. Firstly we
choose a sequence of time $t^{(i)}$ such that $\lim_{i\rightarrow
\infty}t^{(i)}=T$. Then we take a sequence of points $x^{(i)}\in M$
satisfying
$$|A|^2(x^{(i)},t^{(i)})=\max_{(x,t)\in M\times [0,t^{(i)})}|
A|^2(x,t),$$ where $\lim_{i\rightarrow \infty}|
A|^2(x^{(i)},t^{(i)})=+\infty$.

Putting $Q^{(i)}=| A|^2(x^{(i)},t^{(i)})$, we consider the rescaling
mean curvature flow
$$F^{(i)}(t)=F\left(\frac{t}{Q^{(i)}}+t^{(i)}\right):(M, g^{(i)}(t))\rightarrow(N, Q^{(i)}h),$$
where $h$ is the metric on $N$. Then the induced metric on $M$ by
the immersion $F^{(i)}(t)$ is
$g^{(i)}(t)=Q^{(i)}g\left(\frac{t}{Q^{(i)}}+t^{(i)}\right)$, $t\in
(-Q^{(i)}t^{(i)},0]$. For $(M,g^{(i)}(t))$, the second fundamental
form $| A^{(i)}|(x,t)\leq 1$ for any $i$.

From \cite{Chen-He}, there exists a subsequence of
$(M,g^{(i)}(t),x^{(i)})$ that converges to a Riemannian manifold
$(\overline{M},\overline{g}(t),\overline{x})$, $t\in (-\infty, 0]$,
and the corresponding subsequence of immersions  $F^{(i)}(t)$
converges to an immersion $\overline{F}(t): \overline{M}\rightarrow
\mathbb{R}^{n+d}$. Then we have
\begin{equation}\label{113}
\begin{split}
\int_{B_{\overline{g}(0)}(\overline{x},1)}|
\overline{A}|_{\overline{g}(0)}^{p}d\overline{\mu}_{\overline{g}(0)}\leq&
\lim_{i\rightarrow \infty}\int_{B_{g^{(i)}(0)}(x^{(i)},1)}|
A|^{p}_{g^{(i)}(0)}d\mu_{g^{(i)}(0)}\\
=&\lim_{i\rightarrow
\infty}\frac{1}{(Q^i)^{\frac{p-n}{2}}}\cdot\int_{B_{g(t^{(i)})}(x^{(i)},(Q^{(i)})^{-\frac{1}{2}})}|
A|_{g(t^{(i)})}^{p}d\mu_{g(t^{(i)})}\\
=&0. \end{split}\end{equation}

The last equality in (\ref{113}) holds since $\lim_{t\rightarrow
T}\int_M | A |^p d\mu<+\infty$ and $Q^i\rightarrow \infty$ as
$i\rightarrow \infty$. The equality (\ref{113}) implies that
$|\overline{A}|\equiv 0$ on the ball
$B_{\overline{g}(0)}(\overline{x},1)$. In particular,
$|\overline{A}|(\overline{x},0)=0$. On the other hand, the points
selecting process implies that
$$|\overline{A}|(\overline{x},0)=\lim_{i\rightarrow \infty}|
A|_{g^{(i)}}(x^{(i)},0)=1.$$The contradiction completes the proof.
\end{proof}

\begin{remark}
Consider the totally umbilical spheres  in a complete simply
connected space form $\mathbb{F}^{n+d}(c)$ with constant curvature
$c$. Suppose the totally umbilical sphere satisfies $|H|^2+n^2c>0$.
Then along the mean curvature flow, these totally umbilical spheres
remain totally umbilical, and converge to a  round point in finite
time. On the other hand, it is easy to check that
$\int_{M_t}|A|^nd\mu_t$ is uniformly bounded along the mean
curvature flow. From these examples we see that the condition $p>n$
in Lemma \ref{lp-extension} is optimal.
\end{remark}

\section{The convergence of the mean curvature flow}
In this section, we always assume that $F_{t}$ is the solution of
the mean curvature flow of a submanifold in the unit sphere
$\mathbb{S}^{n+d}$. We first prove the following theorem.
\begin{thrm}\label{thm:convergence-S^n+d}
Let $F_0:M^n\rightarrow \mathbb{S}^{n+d}$ $(n\geq3)$ be a smooth
closed submanifold.  For given positive numbers $p\in(n,\infty)$ and
$q\in(1,\infty)$, there is a positive constant $C_1$ depending on
$n$, $p$, $q$, the upper bound  $\Lambda$ on the  $L^p$-norm of the
second fundamental form of the submanifold, such that if
\begin{equation*}||\mathring{A}||_{L^q(M_0)}<C_1,\end{equation*}
then the mean curvature flow with $F_0$ as initial value has a
unique solution $F:M\times[0,T)\rightarrow \mathbb{S}^{n+d}$, and
either

$(1)$ $T<\infty$ and $M_t$ converges to a round point as
$t\rightarrow T$; or

$(2)$ $T=\infty$ and $M_t$ converges to a totally geodesic sphere in
$\mathbb{S}^{n+d}$ as $t\rightarrow \infty$.

\end{thrm}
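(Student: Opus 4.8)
The plan is to mimic the now-standard strategy of Andrews--Baker combined with a Moser-type iteration driven by the integral curvature smallness, replacing the pointwise pinching used in the space-form setting by an $L^q$-smallness of $\mathring A$ together with an $L^p$-bound on $A$. The first step is \emph{short-time existence}: a smooth closed submanifold of $\mathbb{S}^{n+d}$ always admits a unique mean curvature flow on a maximal interval $[0,T)$, and by Lemma~\ref{lp-extension} (applied with $N=\mathbb{S}^{n+d}$, which has bounded geometry) the flow can only stop at a finite $T$ if $\int_{M_t}|A|^p\,d\mu_t\to\infty$. So the heart of the argument is an \emph{a priori estimate} showing that $\|A\|_{L^p(M_t)}$ stays bounded (in fact controlled by the initial data) for as long as the flow exists, and moreover that $\|\mathring A\|_{L^q(M_t)}$ decays. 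Once that is in hand, the dichotomy $(1)$/$(2)$ follows from the known structure theory: if the flow exists for all time then the pinching forces $M_t$ to converge to a totally geodesic $\mathbb{S}^n$, while if $T<\infty$ then a rescaling/blow-up analysis (using the evolution inequalities \eqref{evo-ineq-H}, \eqref{evo-ineq-A_0} and the fact that $|\mathring A|^2/|H|^2\to 0$) identifies the singularity as a round point.

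The core estimates proceed as follows. Using the evolution inequality \eqref{evo-ineq-A_0}, multiply by $p|\mathring A|^{p-2}$ (more precisely work with $f=|\mathring A|^2$ and test functions $f^{(q-2)/2}$, $f^{(q-1)}$, etc.) and integrate over $M_t$; the reaction term produces $\int |A|^2|\mathring A|^q\,d\mu_t$, which by H\"older's inequality is bounded by $\|A\|_{L^p}^2\,\|\mathring A\|_{L^{qp/(p-2)}}^{q}\cdot(\text{volume factor})$ — here is where $p>n$ is used so that the Sobolev exponent closes up. Then one invokes the Sobolev inequality of Lemma~\ref{Sobolev-ineq} (with $\alpha$ chosen appropriately relative to $p$, and using that $\|H\|_{L^\alpha}\le\|A\|_{L^\alpha}\le$ something controlled by $\|A\|_{L^p}$ plus volume, since $\alpha$ can be taken in $(n,p]$) to convert the higher $L^{qp/(p-2)}$-norm of $|\mathring A|$ into $\|\nabla|\mathring A|\|_{L^2}^2 + (\text{stuff})\|\mathring A\|_{L^2}^2$; the gradient term is absorbed by the good term $-2\int|\nabla\mathring A|^2$ coming from the Laplacian in \eqref{evo-ineq-A_0} (after Kato's inequality $|\nabla|\mathring A||\le|\nabla\mathring A|$). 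The upshot, provided the coefficient $\|\mathring A\|_{L^q}$ is below an explicit threshold $C_1(n,p,q,\Lambda)$, is a differential inequality $\frac{d}{dt}\|\mathring A\|_{L^q}^q \le -c\,\|\mathring A\|_{L^q}^q$ or at least $\frac{d}{dt}\|\mathring A\|_{L^q}^q \le 0$, giving monotone decay; simultaneously a parallel computation with $|A|^2$ via \eqref{evo-ineq-A} (using $4|H|^2-2n|A|^2\le 4|A|^2$ crudely, and $3|A|^4 = 3|\mathring A|^2|A|^2 + \frac{3}{n}|H|^2|A|^2$) keeps $\|A\|_{L^p(M_t)}$ bounded in terms of $\Lambda$ on any finite interval, which by Lemma~\ref{lp-extension} means $T=\infty$ unless the flow has already shrunk to a point.

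To finish case $(2)$: once $T=\infty$ and $\|\mathring A\|_{L^q(M_t)}$ decays, a further Moser iteration (bootstrapping the $L^q$-decay of $|\mathring A|$ to $L^\infty$-decay, again using Lemma~\ref{Sobolev-ineq} on dyadic powers) gives $\sup_{M_t}|\mathring A|\to 0$; combined with a lower volume bound and the evolution of $|H|^2$ one shows the $M_t$ subconverge, and the limit is a closed submanifold with $\mathring A\equiv 0$, i.e.\ totally umbilical, hence a round sphere $\mathbb{S}^n(r)$ in $\mathbb{S}^{n+d}$; the evolution equation $\frac{\partial}{\partial t}d\mu_t=-H^2\,d\mu_t$ forces $H\to 0$ in the all-time case (otherwise volume decays to $0$ and $T<\infty$), so the limit is totally geodesic. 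For case $(1)$, $T<\infty$ means $\|A\|_{L^p(M_t)}\to\infty$; a type-I blow-up argument as in Huisken--Andrews--Baker (the pinching $|A|^2\le \frac{1}{n}|H|^2+\text{small}$ being preserved in the rescaled limit, since $\|\mathring A\|_{L^q}$ is small) produces a shrinking sphere soliton, so $M_t$ converges to a round point. The main obstacle I expect is the bookkeeping in the first core estimate: one must choose the auxiliary exponent $\alpha$ in the Sobolev inequality, the test-function power, and the interpolation parameters so that (a) all the $L$-norms of $A$ and $H$ that appear are dominated by the single quantity $\Lambda=\|A\|_{L^p}$ and the (bounded, along the flow-decreasing) volume, and (b) the coefficient multiplying the bad $\|\mathring A\|_{L^{2Q}}^2$ term after Sobolev is exactly $C\cdot\|\mathring A\|_{L^q}^{\text{(positive power)}}$, so that smallness of $\|\mathring A\|_{L^q}$ makes it absorbable — getting this with an \emph{explicitly computable} constant, rather than just an existence statement, is the delicate part, and is presumably where the restriction $n\ge 3$ (needed for the $L^{2n/(n-2)}$ Sobolev exponent) is essential.
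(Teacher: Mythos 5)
Your overall framework (integral estimates via Lemma \ref{Sobolev-ineq}, Moser iteration, Lemma \ref{lp-extension} for extension) points in the right direction, but two central steps of your outline do not work as stated, and the actual closing argument is missing. First, the claimed monotone decay $\frac{d}{dt}\|\mathring{A}\|_{L^q}^q\le -c\,\|\mathring{A}\|_{L^q}^q$ (or even $\le 0$) under a smallness threshold does not follow from the computation you describe. From \eqref{evo-ineq-A_0} the reaction term is $\int_{M_t}|A|^2|\mathring{A}|^q d\mu_t$; after H\"older against $\|A\|_{L^p}\le 2\Lambda$, Sobolev, and Young absorption of the gradient piece into $-2\int|\nabla\mathring{A}|^2$, the leftover is $C(n,p,q,\Lambda)\int_{M_t}|\mathring{A}|^q d\mu_t$ --- the smallness of $\|\mathring{A}\|_{L^q}$ multiplies both sides of the inequality equally and cannot make the coefficient negative, so your mechanism (b) only yields bounded exponential growth, i.e.\ a doubling-time estimate, not decay. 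Likewise, the inequality for $\int|A|^p$ is superlinear (Riccati type), so it controls $\|A\|_{L^p(M_t)}$ only on a short interval $[0,T_1(n,p,\Lambda)]$, not ``on any finite interval'' or ``for as long as the flow exists''; indeed in the round-point case $\|A\|_{L^p(M_t)}$ must blow up, since $p>n$. Second, your endgame --- a type-I blow-up analysis identifying a shrinking-sphere limit in finite time, and a compactness/umbilicity argument with exponential convergence in infinite time --- is asserted rather than proved; these are exactly the hard parts of Andrews--Baker/Baker and are not supplied by your estimates.

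The paper closes the argument differently and much more economically, and this is the idea your proposal is missing: the integral hypotheses are used only on a short time interval. One shows $\|A\|_{L^p(M_t)}\le 2\Lambda$ on $[0,T_1]$ and $\|\mathring{A}\|_{L^q(M_t)}\le 2\varepsilon$ on $[0,T_2]$, with $T_2\in(0,T_1]$ depending only on $n,p,q,\Lambda$; then a Moser iteration based on Lemma \ref{Sobolev-ineq} (applied to the regularized quantity $h_\sigma$) converts the space-time $L^q$ bound into a pointwise bound $|\mathring{A}|^2\le c_4\varepsilon^2$ at the single positive time $t=T_2$ --- the iteration constant degenerates only as $t\to 0$, which is why evaluating at a definite later time slice is the key trick. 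Choosing $C_1$ so small that on $M_{T_2}$ one has $|A|^2\le\frac{|H|^2}{n-1}+2$ for $n\ge4$ (respectively $|A|^2\le\frac{4|H|^2}{9}+\frac43$ for $n=3$), the theorem then follows by applying Baker's pointwise-pinching convergence theorem to $M_{T_2}$ together with uniqueness of the mean curvature flow; both alternatives (1) and (2) are inherited from that theorem, with no independent blow-up or long-time analysis needed. To repair your proposal you would either have to import that pointwise-pinching convergence theorem in the same way, or genuinely prove the decay and the singularity/convergence analysis you sketched, which is a substantially longer undertaking.
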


To prove Theorem \ref{thm:convergence-S^n+d}, we need some lemmas.

\begin{lem}\label{lem:time>T-S^n+d}If $||A||_{L^p(M_0)}\leq \Lambda$ for some $p>n$ at
$t=0$, then there is $T_1>0$ depending only on $n,p,\Lambda$ such
that there holds $||A||_{L^p(M_t)}\leq 2\Lambda$ for $t\in [0,T_1]$.
\end{lem}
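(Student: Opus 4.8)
The plan is to derive a differential inequality for $f(t):=\int_{M_t}|A|^p\,d\mu_t$ and then integrate it. First I would differentiate $f$ in $t$, using Lemma~\ref{Lemma-evo}, the volume evolution $\partial_t(d\mu_t)=-|H|^2d\mu_t$, and the pointwise identity $\partial_t|A|^p=\tfrac{p}{2}|A|^{p-2}\partial_t|A|^2$; then \eqref{evo-ineq-A} gives
\begin{equation*}
f'(t)\le\int_{M_t}\Big[\tfrac{p}{2}|A|^{p-2}\big(\triangle|A|^2-2|\nabla A|^2+3|A|^4+4|H|^2-2n|A|^2\big)-|A|^p|H|^2\Big]\,d\mu_t.
\end{equation*}
Since $M_t$ is closed, integrating by parts converts the $\triangle|A|^2$ term into $-p(p-2)\int_{M_t}|A|^{p-2}|\nabla|A||^2$; combining it with $-p\int_{M_t}|A|^{p-2}|\nabla A|^2$, applying Kato's inequality $|\nabla|A||\le|\nabla A|$, and writing $u:=|A|^{p/2}$ (so $\int|\nabla u|^2=\tfrac{p^2}{4}\int|A|^{p-2}|\nabla|A||^2$), the whole gradient contribution is bounded by $-\tfrac{4(p-1)}{p}\int_{M_t}|\nabla u|^2$. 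Discarding the nonpositive terms $-\int|A|^p|H|^2$ and $-pn\int|A|^p$, and using $|H|^2\le n|A|^2$ in the remaining curvature term, I arrive at
\begin{equation*}
f'(t)\le-\tfrac{4(p-1)}{p}\int_{M_t}|\nabla u|^2\,d\mu_t+\tfrac{3p}{2}\int_{M_t}|A|^{p+2}\,d\mu_t+2np\,f(t).
\end{equation*}

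The crux is controlling the supercritical term $\int_{M_t}|A|^{p+2}=\int_{M_t}u^{\,r}$ with $r:=2+\tfrac4p$. Since $p>n\ge3$ one has $2<r<\tfrac{2n}{n-2}$, so interpolation between $L^2$ and $L^{2n/(n-2)}$ yields $\|u\|_{L^r}^r\le f(t)^{(p-n+2)/p}\|u\|_{L^{2n/(n-2)}}^{2n/p}$. Next I would apply Lemma~\ref{Sobolev-ineq} to $u$ with $\alpha=\alpha_0=p$; because $|H|\le\sqrt n\,|A|$, on any time interval on which $\|A\|_{L^p(M_t)}\le 2\Lambda$ one has $\|H\|_{L^p(M_t)}\le 2\sqrt n\,\Lambda$, so the Sobolev inequality gives $\|u\|_{L^{2n/(n-2)}}^2\le C\big(\int_{M_t}|\nabla u|^2+B_0 f(t)\big)$ with $B_0=B_0(n,p,\Lambda)$. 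Substituting this back, using the subadditivity of $s\mapsto s^{n/p}$ (as $n/p<1$) and then Young's inequality with exponents $\tfrac{p}{p-n},\tfrac{p}{n}$, the term $\tfrac{3p}{2}\int|A|^{p+2}$ is absorbed into $\tfrac{2(p-1)}{p}\int|\nabla u|^2$ at the cost of lower-order powers of $f$; hence, still on any interval where $\|A\|_{L^p(M_t)}\le 2\Lambda$,
\begin{equation*}
f'(t)\le C_2\big(f(t)^{1+2/(p-n)}+f(t)^{1+2/p}+f(t)\big),\qquad C_2=C_2(n,p,\Lambda).
\end{equation*}
This is exactly where only $n,p,\Lambda$ (and not the geometry of $M_0$) enter: the Sobolev constant depends on $M_t$ only through $\|H\|_{L^p(M_t)}$, which is controlled by $2\Lambda$.

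Finally I would integrate this ODE by a continuity argument. Let $[0,\tau)$ be the maximal subinterval of the existence interval on which $f(t)<(2\Lambda)^p$; as $f(0)=\|A\|_{L^p(M_0)}^p\le\Lambda^p<(2\Lambda)^p$ (the case $\Lambda=0$, i.e.\ $M_0$ totally geodesic, being trivial) and $f$ is continuous, $\tau>0$. On $[0,\tau)$ the bracket above is at most $\big((2\Lambda)^{2p/(p-n)}+(2\Lambda)^2+1\big)f(t)$, so $f'\le C_3 f$ with $C_3=C_3(n,p,\Lambda)$, and Gronwall's inequality gives $f(t)\le\Lambda^p e^{C_3 t}$. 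Taking $T_1:=\tfrac{p\ln 2}{C_3}$, which depends only on $n,p,\Lambda$, we get $f(t)\le\Lambda^p 2^p=(2\Lambda)^p$ for $t\in[0,\min\{\tau,T_1\}]$, and if $\tau<T_1$ this contradicts $f(t)\to(2\Lambda)^p$ as $t\to\tau^-$; hence $\tau\ge T_1$ and $\|A\|_{L^p(M_t)}\le 2\Lambda$ for $t\in[0,T_1]$. (If the maximal existence time $T$ of the flow were $<T_1$, then $\tau=T$, so the bound just obtained holds on $[0,T)$, and Lemma~\ref{lp-extension} would extend the flow past $T$, a contradiction; thus the solution in fact exists on $[0,T_1]$.) The only genuinely delicate step is the treatment of the supercritical term $\int|A|^{p+2}$ via Lemma~\ref{Sobolev-ineq}; everything else is routine.
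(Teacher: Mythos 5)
Your proof is correct and follows essentially the same route as the paper: an $L^p$ energy estimate for $|A|$ from the evolution inequality, with the supercritical term $\int|A|^{p+2}$ handled by H\"older interpolation, the Sobolev inequality of Lemma \ref{Sobolev-ineq} (with $\alpha=p$ and $|H|\le\sqrt{n}|A|$), and Young's inequality to absorb the gradient term, then an ODE/Gronwall comparison together with Lemma \ref{lp-extension} to reach $T_1$. The only cosmetic differences are that you keep the $-2|\nabla A|^2$ term via Kato (the paper simply drops it) and you make explicit the continuity/bootstrap argument that the paper compresses into ``the maximum principle and Lemma \ref{lp-extension}.''
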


\begin{proof}

Putting $u=|A|^2$, we obtain from (\ref{evo-ineq-A})
\begin{equation}\label{evo-u-S^n+p}
\frac{\partial}{\partial t}u\leq \Delta u+3u^2+2nu.
\end{equation}

From (\ref{evo-u-S^n+p}), we have
\begin{equation}\label{evo-int-u^p1111}
\begin{split}
\frac{\partial}{\partial t}\int_{M_t} u^{\frac{p}{2}}d\mu_t
=&\int_{M_t}\frac{p}{2}u^{\frac{p}{2}-1}\frac{\partial}{\partial
t}ud\mu_t
+\int_{M_t} u^{\frac{p}{2}} \frac{\partial}{\partial t}d\mu_t\\
=& \frac{p}{2}\int_{M_t} u^{\frac{p}{2}-1}(\Delta
u+c_1u^2)d\mu_t-\int_{M_t}H^2u^{\frac{p}{2}}d\mu_t\\
\leq&-\frac{4(p-2)}{p}\int_{M_t}|\nabla
u^{\frac{p}{4}}|^2d\mu_t+\frac{3p}{2}\int_{M_t}
u^{\frac{p}{2}+1}d\mu_t+np\int_{M_t} u^{\frac{p}{2}}d\mu_t.
\end{split}
\end{equation}

For the second term of the right hand side of
(\ref{evo-int-u^p1111}), we have by H\"{o}lder's inequality and
Sobolev type inequality in Lemma \ref{Sobolev-ineq},
\begin{equation}\label{ineq-fp+1-1}
\begin{split}
\int_{M_t} u^{\frac{p}{2}+1}d\mu_t \leq&
\bigg(\int_{M_t}u^{\frac{p}{2}}d\mu_t\bigg)^{{\frac{2}{p}}}\cdot
\bigg(\int_{M_t}(u^{\frac{p}{2}})^{{\frac{p}{p-2}}}d\mu_t\bigg)^{{\frac{p-2}{p}}}\\
\leq&
\bigg(\int_{M_t}u^{\frac{p}{2}}d\mu_t\bigg)^{{\frac{2}{p}}}\cdot
\bigg(\int_{M_t}u^{\frac{p}{2}}d\mu_t\bigg)^{\frac{p-n}{p}}\cdot
\bigg(\int_{M_t}(u^{\frac{p}{4}})^{\frac{2n}{n-2}}d\mu_t\bigg)^{\frac{n-2}{p}}\\
\leq&
\bigg(\int_{M_t}u^{\frac{p}{2}}d\mu_t\bigg)^{{\frac{2}{p}}}\cdot
\bigg(\int_{M_t}u^{\frac{p}{2}}d\mu_t\bigg)^{\frac{p-n}{p}}\\
&\times\bigg\{C_{n,p}\bigg(\int_{M_t}|\nabla
u^{\frac{p}{4}}|^2d\mu_t+\bigg[1+\bigg(\int_{M_t}|H|^pd\mu_t\bigg)^{\frac{2}{p-n}}\bigg]
\int_{M_t}u^\frac{p}{2}d\mu_t\bigg)\bigg\}^{\frac{n}{p}}
\end{split}
\end{equation}
\begin{equation*}
\begin{split} \leq&
\bigg(\int_{M_t}u^{\frac{p}{2}}d\mu_t\bigg)^{{\frac{p-n+2}{p}}}
\cdot\bigg[C_{n,p}^{\frac{n}{p}}\bigg(\int_{M_t}|\nabla
u^{\frac{p}{4}}|^2d\mu_t\bigg)^{\frac{n}{p}}\\
&+C_{n,p}^{\frac{n}{p}}\bigg(\int_{M_t}u^{\frac{p}{2}}d\mu_t\bigg)^{\frac{n}{p}}+n^{\frac{n}{p-n}}C_{n,p}^{\frac{n}{p}}\bigg(\int_{M_t}u^{\frac{p}{2}}d\mu_t\bigg)^{(1+\frac{2}{p-n})\frac{n}{p}}
\bigg]\\
=& C_{n,p}^{\frac{n}{p}}\bigg(\int_{M_t}u^{\frac{p}{2}}d\mu_t\bigg)^{\frac{p+2}{p}}+  n^{\frac{n}{p-n}}C_{n,p}^{\frac{n}{p}}\bigg(\int_{M_t}u^{\frac{p}{2}}d\mu_t\bigg)^{\frac{p-n+2}{p}+(1+\frac{2}{p-n})\frac{n}{p}}\\
&+C_{n,p}^{\frac{n}{p}}
\bigg(\int_{M_t}u^{\frac{p}{2}}d\mu_t\bigg)^{{\frac{p-n+2}{p}}}\cdot
\bigg(\int_{M_t}|\nabla
u^{\frac{p}{4}}|^2d\mu_t\bigg)^{\frac{n}{p}}\\
\leq& C_{n,p}^{\frac{n}{p}}\bigg(\int_{M_t}u^{\frac{p}{2}}d\mu_t\bigg)^{\frac{p+2}{p}}+n^{\frac{n}{p-n}}C_{n,p}^{\frac{n}{p}}\bigg(\int_{M_t}u^{\frac{p}{2}}d\mu_t\bigg)^{\frac{p-n+2}{p-n}}\\
&+C_{n,p}^{\frac{n}{p}}\frac{p-n}{p}\epsilon^{\frac{p}{p-n}}\bigg(\int_{M_t}u^{\frac{p}{2}}d\mu_t\bigg)^{\frac{p-n+2}{p-n}}
+C_{n,p}^{\frac{n}{p}}\frac{n}{p}\epsilon^{-\frac{p}{n}}\bigg(\int_{M_t}|\nabla
u^{\frac{p}{4}}|^2d\mu_t\bigg),
\end{split}
\end{equation*}
for any $\epsilon>0$.

Combining (\ref{evo-int-u^p1111}) and (\ref{ineq-fp+1-1}), we obtain

\begin{equation}\label{evo-int-u^p2222cc22222}
\begin{split}
\frac{\partial}{\partial t}\int_{M_t} u^{\frac{p}{2}}d\mu_t
\leq&\bigg(\frac{3n}{2}C_{n,p}^{\frac{n}{p}}\epsilon^{-\frac{p}{n}}
-\frac{4(p-2)}{p}\bigg)\int_{M_t}|\nabla
u^{\frac{p}{4}}|^2d\mu_t\\
&+np\int_{M_t}
u^{\frac{p}{2}}d\mu_t+\frac{3p}{2}C_{n,p}^{\frac{n}{p}}\bigg(\int_{M_t}u^{\frac{p}{2}}d\mu_t\bigg)^{\frac{p+2}{p}}\\
&+ \frac{3p}{2}\bigg(n^{\frac{n}{p-n}}C_{n,p}^{\frac{n}{p}}
+C_{n,p}^{\frac{n}{p}}\frac{p-n}{p}\epsilon^{\frac{p}{p-n}}\bigg)\bigg(\int_{M_t}u^{\frac{p}{2}}d\mu_t\bigg)^{\frac{p-n+2}{p-n}}
.
\end{split}
\end{equation}
Pick
$\epsilon=\Big(\frac{3npC_{n,p}^{\frac{n}{p}}}{8(p-2)}\Big)^{\frac{n}{p}}$.
Then (\ref{evo-int-u^p2222cc22222}) reduces to
\begin{equation}\label{evo-int-u^p222222222}
\frac{\partial}{\partial t}\int_{M_t} u^{\frac{p}{2}}d\mu_t \leq
np\int_{M_t}
u^{\frac{p}{2}}d\mu_t+\frac{3p}{2}C_{n,p}^{\frac{n}{p}}\bigg(\int_{M_t}u^{\frac{p}{2}}d\mu_t\bigg)^{\frac{p+2}{p}}
+c_1\bigg(\int_{M_t}u^{\frac{p}{2}}d\mu_t\bigg)^{\frac{p-n+2}{p-n}},
\end{equation}
where $c_1=\frac{3p}{2}\Big(n^{\frac{n}{p-n}}C_{n,p}^{\frac{n}{p}}
+C_{n,p}^{\frac{n}{p}}\frac{p-n}{p}\Big(\frac{3npC_{n,p}^{\frac{n}{p}}}{8(p-2)}\Big)^{\frac{n}{p-n}}\Big)$.
Then from the maximum principle and Lemma \ref{lp-extension}, there
exists a positive constant $T_1$ depending only on $n,p, \Lambda$
such that the mean curvature is smooth on $[0,T_1]$ and
$||A||_{L^p(M_t)}\leq 2\Lambda$ for $t\in [0,T_1]$. This completes
the proof of the lemma.
\end{proof}

\begin{lem}\label{lem:A^0-S^n+d}There exists a constant $T_2\in (0, T_1]$ depending only
on $n,p,q,\Lambda$ such that if
$||\mathring{A}||_{L^q(M_0)}<\varepsilon$ at $t=0$, then there holds
$||\mathring{A}||_{L^q(M_t)}\leq2\varepsilon$ for $t\in [0,T_2]$.
\end{lem}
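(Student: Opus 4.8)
The plan is to mimic the proof of Lemma~\ref{lem:time>T-S^n+d}, but now applied to the quantity $w=|\mathring{A}|^2$ and in the $L^{q/2}$-norm rather than the $L^{p/2}$-norm. Starting from the evolution inequality~(\ref{evo-ineq-A_0}), namely $\frac{\partial}{\partial t}|\mathring{A}|^2\leq\triangle|\mathring{A}|^2-2|\nabla\mathring{A}|^2+13|A|^2|\mathring{A}|^2$, I would set $w=|\mathring A|^2$, multiply by $\frac{q}{2}w^{\frac q2-1}$, integrate over $M_t$, and use $\frac{\partial}{\partial t}d\mu_t=-H^2d\mu_t$ to throw away the resulting nonpositive term. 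This gives, after the usual integration by parts,
\begin{equation*}
\frac{\partial}{\partial t}\int_{M_t}w^{\frac q2}d\mu_t\leq-\frac{4(q-2)}{q}\int_{M_t}|\nabla w^{\frac q4}|^2d\mu_t+\frac{13q}{2}\int_{M_t}|A|^2w^{\frac q2}d\mu_t.
\end{equation*}
The key difference from the earlier lemma is that the cubic term now carries a genuine factor of $|A|^2$, so I must control $\int_{M_t}|A|^2w^{q/2}d\mu_t$ using the $L^p$-bound on $|A|$ (valid by Lemma~\ref{lem:time>T-S^n+d} on $[0,T_1]$, with constant $2\Lambda$).

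The central estimate is therefore a H\"older/Sobolev interpolation for $\int_{M_t}|A|^2w^{q/2}d\mu_t$. I would split off $|A|^2$ with exponent $p/2$, getting a factor $\big(\int|A|^p\big)^{2/p}\leq(2\Lambda)^2$, leaving $\big(\int w^{\frac{q}{2}\cdot\frac{p}{p-2}}d\mu_t\big)^{\frac{p-2}{p}}$; then, exactly as in~(\ref{ineq-fp+1-1}), interpolate $w^{q/2}$ between $L^1$ and $L^{\frac{n}{n-2}}$, apply Lemma~\ref{Sobolev-ineq} with $v=w^{q/4}$ and $\alpha=p$ (using $\|H\|_{L^p(M_t)}\le\|A\|_{L^p(M_t)}\le 2\Lambda$ to control the $(1+\|H\|^{2p/(p-n)}_{L^p})$ factor), and finally use Young's inequality to absorb the gradient term $\int|\nabla w^{q/4}|^2$ into the negative term on the right, at the cost of a constant depending on $n,p,q,\Lambda$. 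This yields an inequality of the schematic form
\begin{equation*}
\frac{\partial}{\partial t}\int_{M_t}w^{\frac q2}d\mu_t\leq C_2\int_{M_t}w^{\frac q2}d\mu_t+C_2\Big(\int_{M_t}w^{\frac q2}d\mu_t\Big)^{1+\frac2q}+C_2\Big(\int_{M_t}w^{\frac q2}d\mu_t\Big)^{1+\frac{2}{q}\cdot\frac{?}{?}},
\end{equation*}
where all constants depend only on $n,p,q,\Lambda$; concretely, after the interpolation the right side is a polynomial in $y(t):=\int_{M_t}w^{q/2}d\mu_t$ with superlinear exponents and $n,p,q,\Lambda$-dependent coefficients.

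With such a differential inequality $y'(t)\le\Phi(y(t))$, where $\Phi$ is a fixed increasing function vanishing to first order at $0$ and depending only on $n,p,q,\Lambda$, a standard ODE comparison argument produces $T_2\in(0,T_1]$, depending only on $n,p,q,\Lambda$, so that if $y(0)=\|\mathring A\|_{L^q(M_0)}^q<\varepsilon^q$ then $y(t)\le(2\varepsilon)^q$ for $t\in[0,T_2]$: one simply solves the comparison ODE $\bar y'=\Phi(\bar y)$ with initial datum $\varepsilon^q$ and notes that it takes a positive amount of time, bounded below independently of the submanifold, to double. I expect the main obstacle to be bookkeeping the interpolation exponents so that everything genuinely reduces to powers of $y(t)$ with the correct superlinear exponents and so that the Young-inequality constants are controlled purely by $n,p,q$ and the curvature bound $\Lambda$ --- in particular verifying that the exponent $\frac{q}{2}\cdot\frac{p}{p-2}$ together with the $L^1$--$L^{\frac{n}{n-2}}$ interpolation lands in the range where Lemma~\ref{Sobolev-ineq} is applicable, which is where the hypotheses $n\ge3$, $p>n$, $q>1$ are all used. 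Once that is in place the conclusion is routine.
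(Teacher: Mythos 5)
Your overall strategy (split $\int_{M_t}|A|^2|\mathring A|^{q}\,d\mu_t$ by H\"older against $\|A\|_{L^p(M_t)}\le 2\Lambda$ from Lemma \ref{lem:time>T-S^n+d}, interpolate between $L^1$ and $L^{\frac{n}{n-2}}$, apply Lemma \ref{Sobolev-ineq}, and absorb the gradient term by Young) is exactly the paper's, but your assessment of what this computation produces, and the ODE argument you build on it, contain a genuine gap. Because the dangerous factor is $|A|^2$, which carries the \emph{uniform} bound $(2\Lambda)^2$ in $L^{p/2}$, the exponents that come out of the interpolation are $\frac{p-n}{p}$ and $\frac{n}{p}$, which sum to one; after Young's inequality every term on the right is \emph{linear} in $y(t)=\int_{M_t}h_\sigma^{q}\,d\mu_t$ (this is inequality (\ref{ineq-h1ddddd1}) with $r=q$), and no superlinear powers of $y$ appear. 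This linearity is not a bookkeeping nicety: it is what makes the lemma true as stated. Gronwall applied to $y'\le c_2q^{\frac{p}{p-n}}y$ gives a doubling time $\frac{q\ln 2}{c_2q^{p/(p-n)}}$ depending only on $n,p,q,\Lambda$ and \emph{not} on $\varepsilon$. Your schematic inequality with terms like $y^{1+2/q}$, followed by the claim that the comparison ODE $\bar y'=\Phi(\bar y)$ with superlinear $\Phi$ "takes a positive amount of time, bounded below independently of the submanifold, to double," is false: for $\Phi(y)=Cy^{1+\delta}$ the doubling time from $y_0=\varepsilon^q$ is comparable to $y_0^{-\delta}$ and tends to $0$ as $\varepsilon\to\infty$, so your $T_2$ would depend on $\varepsilon$. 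Since the lemma (and its use in Theorem \ref{thm:convergence-S^n+d}, where no a priori upper bound on $\varepsilon$ is imposed at this stage) requires $T_2=T_2(n,p,q,\Lambda)$, you must verify that the superlinear terms do not in fact occur, i.e. carry out the interpolation and observe the exponents close up linearly, rather than fall back on the doubling-time argument.

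A secondary, more technical point: you work directly with $w=|\mathring A|^2$ and multiply the evolution inequality by $\tfrac q2 w^{\frac q2-1}$. For $1<q<2$ this factor blows up at zeros of $\mathring A$, and $|\mathring A|^{q}$ need not be twice differentiable there, so the integration by parts needs justification. The paper avoids this by regularizing: it sets $h_\sigma=(|\mathring A|^2+nd\sigma^2)^{1/2}$, derives from (\ref{evo-ineq-A_0}) the scalar inequality $\partial_t h_\sigma\le\Delta h_\sigma+13|A|^2h_\sigma$, runs the whole $L^r$ estimate for $h_\sigma$ (which is also reused later for the Moser iteration in Lemma \ref{lem:A^0-pointwiseffffff}), and only lets $\sigma\to 0$ at the end. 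You should incorporate this regularization, or restrict to $q\ge 2$ and say so.
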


\begin{proof}
From Lemmas \ref{Sobolev-ineq} and \ref{lem:time>T-S^n+d} we have
for a Lipschitz function $v$ and $t\in [0,T_1]$,
\begin{equation}\label{sobo2}
 ||{v}||^2_{L^{\frac{2n}{n-2}}({M_t})}\leq C_{n,p}\left(||{\nabla v}||^2_{L^{2}({M_t})} +
 \Big(1+n^{\frac{p}{p-n}}(2\Lambda)^{\frac{2p}{p-n}}\Big)||{v}||^2_{L^{2}({M_t})}\right),
\end{equation}
where $C_{n,p}$ is a positive constant depending only on $n$ and
$p$.

Define a tensor $\tilde{\mathring{A}}$ by
$\tilde{\mathring{h}}^{\alpha}_{ij}=\mathring{h}^{\alpha}_{ij}+\sigma\eta^\alpha
\delta_{ij}$, where $\eta^\alpha=1$. Set
$h_\sigma=|\tilde{\mathring{A}}|=(|\mathring{A}|^2+nd\sigma^2)^{\frac{1}{2}}$.
Then from (\ref{evo-ineq-A_0}), we have
\begin{equation}\label{evo-A^0-S^n+p22222}
\frac{\partial}{\partial t}h_\sigma\leq\triangle
h_\sigma+13|A|^2h_\sigma .
\end{equation}
For any $r\geq q>1$, we have
\begin{equation}
\begin{split}\label{ineq-h11111111}
\frac{1}{r}\frac{\partial}{\partial t}\int_{M_t} h_\sigma^{r}d\mu_t
=&\int_{M_t}h_\sigma^{r-1}\frac{\partial}{\partial t}h_\sigma d\mu_t
+\frac{1}{r}\int_{M_t} h_\sigma^{p} \frac{\partial}{\partial t}d\mu_t\\
\leq&-\frac{4(r-1)}{r^2}\int_{M_t}|\nabla h_\sigma
^{\frac{r}{2}}|^2d\mu_t+13\int_{M_t}|A|^2h_\sigma^rd\mu_t.
\end{split}
\end{equation}

For the second term of the right hand side of
(\ref{ineq-h11111111}), we have the following estimate.
\begin{equation}
\begin{split}\label{second-term0000000}
\int_{M_t}|A|^2h_\sigma^rd\mu_t
\leq&\bigg(\int_{M_t}|A|^{p}d\mu_t\bigg)^{\frac{2}{p}}\cdot\bigg(\int_{M_t}h_\sigma^{r\cdot
\frac{p}{p-2}}d\mu_t\bigg)^{\frac{p-2}{p}}\\
\leq&(2\Lambda)^{2}\bigg(\int_{M_t}h_\sigma^r
d\mu_t\bigg)^{\frac{p-n}{p}}
\cdot\bigg(\int_{M_t}(h_\sigma^r)^{\frac{n}{n-2}}
d\mu_t\bigg)^{\frac{n-2}{n}\cdot \frac{n}{p}}\\
\leq &(2\Lambda)^{2}\bigg(\int_{M_t}h_\sigma^r
d\mu_t\bigg)^{\frac{p-n}{p}}\cdot\bigg[C_{n,p}\bigg(\int_{M_t}|\nabla
h_\sigma^{\frac{r}{2}}|^2d\mu_t\\
&+\Big(1+n^{\frac{p}{p-n}}(2\Lambda)^{\frac{2p}{p-n}}
\Big)\int_{M_t}
h_\sigma^rd\mu_t\bigg)\bigg]^{\frac{n}{p}}\\
\leq&(2\Lambda)^{2}\bigg(\int_{M_t}h_\sigma^r
d\mu_t\bigg)^{\frac{p-n}{p}}\cdot\bigg[C_{n,p}^{\frac{n}{p}}\bigg(\int_{M_t}|\nabla
h_\sigma^{\frac{r}{2}}|^2d\mu_t\bigg)^{\frac{n}{p}}\\
&+C_{n,p}^{\frac{n}{p}}\Big(1+n^{\frac{p}{p-n}}(2\Lambda)^{\frac{2p}{p-n}}
\Big)^{\frac{n}{p}}\bigg(\int_{M_t}
h_\sigma^rd\mu_t\bigg)^{\frac{n}{p}}\bigg]
\end{split}
\end{equation}
\begin{equation*}
\begin{split}
=&(2\Lambda)^{2}C_{n,p}^{\frac{n}{p}}\Big(1+n^{\frac{p}{p-n}}(2\Lambda)^{\frac{2p}{p-n}}
\Big)^{\frac{n}{p}}\int_{M_t}
h_\sigma^rd\mu_t\\
&+(2\Lambda)^{2}C_{n,p}^{\frac{n}{p}}\bigg(\int_{M_t}h_\sigma^r
d\mu_t\bigg)^{\frac{p-n}{p}}\cdot\bigg(\int_{M_t}|\nabla
h_\sigma^{\frac{r}{2}}|^2d\mu_t\bigg)^{\frac{n}{p}}\\
\leq&(2\Lambda)^{2}C_{n,p}^{\frac{n}{p}}\Big(1+n^{\frac{p}{p-n}}(2\Lambda)^{\frac{2p}{p-n}}
\Big)^{\frac{n}{p}}\int_{M_t}
h_\sigma^rd\mu_t\\
&+(2\Lambda)^{2}C_{n,p}^{\frac{n}{p}}\cdot\frac{p-n}{p}\mu^{\frac{p}{p-n}}\int_{M_t}h_\sigma^r
d\mu_t\\
&+(2\Lambda)^{2}C_{n,p}^{\frac{n}{p}}\cdot\frac{n}{p}\mu^{-\frac{p}{n}}\int_{M_t}|\nabla
h_\sigma^{\frac{r}{2}}|^2d\mu_t,
\end{split}
\end{equation*}
for any $\mu>0$.

Then from (\ref{ineq-h11111111}) and (\ref{second-term0000000}) we
have
\begin{equation}
\begin{split}\label{ineq-h1ddddd1iiiii}
\frac{\partial}{\partial t}\int_{M_t} h_\sigma^{r}d\mu_t
\leq&\bigg(13r\cdot
(2\Lambda)^{2}C_{n,p}^{\frac{n}{p}}\cdot\frac{n}{p}\mu^{-\frac{p}{n}}-\frac{4(r-1)}{r}\bigg)\int_{M_t}|\nabla
h_\sigma
^{\frac{r}{2}}|^2d\mu_t\\
&+13r\cdot\bigg((2\Lambda)^{2}C_{n,p}^{\frac{n}{p}}\Big(1+n^{\frac{p}{p-n}}(2\Lambda)^{\frac{2p}{p-n}}
\Big)^{\frac{n}{p}}\\
&+(2\Lambda)^{2}C_{n,p}^{\frac{n}{p}}\cdot\frac{p-n}{p}\mu^{\frac{p}{p-n}}\bigg)\int_{M_t}h_\sigma^r
d\mu_t.
\end{split}
\end{equation}

Pick $\mu=\Big(\frac{13r^2\cdot
(2\Lambda)^{2}C_{n,p}^{\frac{n}{p}}\cdot\frac{n}{p}}{3(r-1)}\Big)^{\frac{n}{p}}$.
Then from (\ref{ineq-h1ddddd1iiiii}) we have
\begin{equation}\label{ineq-h1ddddd1}
\frac{\partial}{\partial t}\int_{M_t}
h_\sigma^{r}d\mu_t+(1-\frac{1}{q}) \int_{M_t}|\nabla h_\sigma
^{\frac{r}{2}}|^2d\mu_t\leq
c_2r^{1+\frac{n}{p-n}}\int_{M_t}h_\sigma^r d\mu_t,
\end{equation}
where
$c_2=13\bigg((2\Lambda)^{2}C_{n,p}^{\frac{n}{p}}\Big(1+n^{\frac{p}{p-n}}(2\Lambda)^{\frac{2p}{p-n}}
\Big)^{\frac{n}{p}}\cdot\frac{1}{q^{{1+\frac{n}{p-n}}}}+(2\Lambda)^{2}C_{n,p}^{\frac{n}{p}}\cdot\frac{p-n}{p}
 \Big(\frac{13q\cdot
(2\Lambda)^{2}C_{n,p}^{\frac{n}{p}}\cdot\frac{n}{p}}{3(q-1)}\Big)^{\frac{n}{p-n}}
\bigg)$.

Let $r=q$, then we have from (\ref{ineq-h1ddddd1})
\begin{equation*}
\frac{\partial}{\partial t}\int_{M_t} h_\sigma^{q}d\mu_t\leq
c_2q^{1+\frac{n}{p-n}}\int_{M_t}h_\sigma^q d\mu_t,
\end{equation*}
which implies that
\begin{equation*} \int_{M_t} h_\sigma^{q}d\mu_t\leq(2\varepsilon)^q
\end{equation*}
for $t\leq \min\{T_1,\frac{q\ln 2}{c_2q^{\frac{p}{p-n}}}\}$. Setting
$T_2=\min\{T_1,\frac{q\ln 2}{c_2q^{\frac{p}{p-n}}}\}$ and letting
$\sigma\rightarrow 0$, we complete the proof of the lemma.
\end{proof}

\begin{lem}\label{lem:A^0-pointwiseffffff}For any $t\in (0,T_2]$, we
have
\begin{equation}\label{A^0-pointwiseffffff}
|\mathring{A}|^2\leq\bigg(1+\frac{2}{n}\bigg)^{\frac{np(n+2)}{4q(p-n)}}
c_3^{\frac{n}{q}}\bigg(c_2q^{\frac{2n}{p-n}+1}+\frac{(n+2)^2}{2nt}
\bigg)^{\frac{n+2}{q}}\bigg(\int_{0}^{t}\int_{M_t}|\mathring{A}|^{q}d\mu_tdt\bigg)^{\frac{2}{q}},
\end{equation}
for some positive constants $c_2$ and $c_3$ depending only n $n,p,q$
and $\Lambda$.
\end{lem}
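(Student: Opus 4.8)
The plan is to run a Moser iteration on the evolution inequality \eqref{evo-ineq-A_0} for $|\mathring{A}|^2$, using the Sobolev inequality \eqref{sobo2} (valid on $[0,T_1]$ with a constant controlled by $n,p,\Lambda$) to convert the gradient term into a higher-integrability gain, and the $L^p$ bound $\|A\|_{L^p(M_t)}\le 2\Lambda$ from Lemma \ref{lem:time>T-S^n+d} to absorb the ``bad'' curvature factor $|A|^2$. The starting point is inequality \eqref{ineq-h1ddddd1} from the proof of Lemma \ref{lem:A^0-S^n+d}, which holds for every exponent $r\ge q$ with the \emph{same} constant $c_2$ (up to the explicit $r$-power) and already has the gradient term on the left-hand side with a good sign. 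So I would not re-derive anything: I would simply take \eqref{ineq-h1ddddd1} as the basic differential inequality for $\int_{M_t}h_\sigma^r\,d\mu_t$, let $\sigma\to0$ to replace $h_\sigma$ by $|\mathring{A}|$, and iterate over $r=q\,(\tfrac{n}{n-2})^k$.

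First I would set $f=|\mathring{A}|^2$, $r_k = q\left(\tfrac{n}{n-2}\right)^k$, and combine \eqref{ineq-h1ddddd1} with the Sobolev inequality applied to $v=|\mathring{A}|^{r_k/2}$: this yields, for $\phi_k(t):=\int_{M_t}|\mathring{A}|^{r_k}\,d\mu_t$, a differential inequality of the form $\phi_k' + c\,(\text{Sobolev gain}) \le c_2 r_k^{1+\frac{n}{p-n}}\phi_k$ together with $\||\mathring{A}|^{r_k/2}\|^2_{L^{2n/(n-2)}}\le C_{n,p}(\|\nabla|\mathring{A}|^{r_k/2}\|^2_{L^2}+(1+\ldots)\phi_k)$. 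The standard parabolic Moser trick then bounds $\sup_{[t/2,t]}\phi_{k+1}$-type quantities (after integrating in time against a cutoff in $t$ that produces the $\frac{1}{t}$ singularity) by a power of $r_k$, a power of $(c_2 r_k^{2n/(p-n)+1}+\frac{\text{const}}{t})$, and $\phi_k$ raised to the power $\frac{n-2}{n}\cdot\frac{n}{n-2}=$ (the Sobolev ratio). Iterating, the product of the $r_k$-powers converges (since $\sum k\,\theta^k<\infty$ for $\theta=\frac{n-2}{n}<1$), giving the constant $c_3^{n/q}$ and the explicit exponents $\frac{np(n+2)}{4q(p-n)}$ and $\frac{n+2}{q}$ in the statement; the telescoping of the $\phi_k$-powers collapses to $\left(\int_0^t\phi_0\right)^{2/q} = \left(\int_0^t\int_{M_t}|\mathring{A}|^q\right)^{2/q}$, which is exactly the right-hand side of \eqref{A^0-pointwiseffffff}. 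The factor $(1+\tfrac{2}{n})^{\cdots}$ arises from the Sobolev exponent $\tfrac{2n}{n-2}=2(1+\tfrac{2}{n-2})$ appearing once per iteration step.

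The main obstacle, and the part requiring genuine care rather than bookkeeping, is the time-localization: to get a \emph{pointwise-in-time} bound at a single $t\in(0,T_2]$ starting only from a space-time integral $\int_0^t\int_{M_t}|\mathring{A}|^q$, one must insert into \eqref{ineq-h1ddddd1} a cutoff function $\psi(s)$ in the time variable (supported near $s=t$, equal to $1$ on a suitable subinterval), integrate in $s$, and at each stage pay a factor proportional to $\|\psi'\|_\infty$, which scales like the reciprocal of the length of the time interval used at that step. Distributing the available interval $(0,t]$ dyadically among the infinitely many iteration steps, and checking that the resulting product of $(\text{const}/\Delta_k t)$-factors stays finite and assembles into the claimed $\big(c_2q^{2n/(p-n)+1}+\tfrac{(n+2)^2}{2nt}\big)^{(n+2)/q}$, is the delicate accounting; everything else (H\"older, Young's inequality to absorb the gradient term, summing the geometric-type series of exponents) is routine. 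I would therefore organize the write-up as: (i) recall \eqref{ineq-h1ddddd1} for general $r$; (ii) state the one-step Moser estimate with explicit dependence on $r$ and the time-interval length; (iii) iterate and sum, tracking exponents; (iv) identify the limit constants with those in \eqref{A^0-pointwiseffffff}.
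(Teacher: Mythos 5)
Your plan is essentially the paper's own proof: a parabolic Moser iteration built on \eqref{ineq-h1ddddd1} for general $r\geq q$, with a piecewise-linear time cutoff giving both the sup-in-time bound and the space-time gradient bound \eqref{ineq-h4dddd}, the Sobolev inequality \eqref{sobo2} supplying the integrability gain, and the cutoff times telescoping inside $(0,t]$ to produce the $\frac{1}{t}$ factor, before letting $\sigma\to0$. The only correction to your bookkeeping is the per-step exponent: interpolating the sup-in-time $L^{r}$ bound against the spatial Sobolev norm via H\"older gives the gain $r\mapsto r\big(1+\frac{2}{n}\big)$ with the space-time integral raised to the power $1+\frac{2}{n}$ (not the ratio $\frac{n}{n-2}$, and certainly not exponent $\frac{n-2}{n}\cdot\frac{n}{n-2}=1$), exactly as recorded in \eqref{ineq-h6}--\eqref{ineq-h7}.
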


\begin{proof}
Fix $t_0\in (0,T_2]$. For any $\tau,\tau'$ such that
$0<\tau<\tau'<t_0$, define a function $\psi$ on $[0,t_0]$ by
\[\psi(t)=\left\{ \begin{array}{ll}
0&\ \ \ \ \ \ 0\leq t\leq \tau,\\
\frac{t-\tau}{\tau'-\tau}&\ \ \ \ \ \ \tau\leq t\leq \tau',\\
1&\ \ \ \ \ \ \tau'\leq t\leq t_0.
\end{array}
\right.\]

Then from (\ref{ineq-h1ddddd1}), we have
\begin{equation}\label{ineq-h2'111}
\frac{\partial}{\partial t}\left(\psi\int_{M_t}
f^qd\mu_t\right)d\mu_t+\Big(1-\frac{1}{p}\Big)\psi\int_{M_t}|\nabla(f^{\frac{q}{2}})|^2d\mu_t
\leq (c_2r^{\frac{p}{p-n}}\psi+\psi')\int_{M_t} f^qd\mu_t.
\end{equation}

For any $t\in[\tau', t_0]$, integrating both side of
(\ref{ineq-h2'111}) on $[\tau, t]$ implies
\begin{equation}\label{ineq-h4dddd}
\int_{M_t}
h_\sigma^{r}d\mu_t+\Big(1-\frac{1}{p}\Big)\int_{\tau'}^t\int_{M_t}|\nabla
h_\sigma ^{\frac{r}{2}}|^2d\mu_tdt \leq
\Big(c_2r^{\frac{p}{p-n}}+\frac{1}{\tau'-\tau}\Big)\int_{\tau}^{t_0}\int_{M_t}
h_\sigma^rd\mu_tdt.
\end{equation}

On the other hand, by the Sobolev inequality we have
\begin{equation}\label{ineq-h5dddd}
\begin{split}
&\int^{t_0}_{\tau'}\int_{M_t}
h_\sigma^{r(1+\frac{2}{n})}d\mu_tdt\\
\leq&\int^{t_0}_{\tau'}\bigg(\int_{M_t}
h_\sigma^{r}d\mu_t\bigg)^{\frac{2}{n}}\cdot\bigg(\int_{M_t}
h_\sigma^{\frac{nr}{n-2}}d\mu_t
\bigg)^{\frac{n-2}{n}}dt\\
\leq&\max_{t\in [\tau',t_0]}\bigg(\int_{M_t}
h_\sigma^{r}d\mu_t\bigg)^{\frac{2}{n}}\cdot\int^{t_0}_{\tau'}\bigg(\int_{M_t}
h_\sigma^{\frac{nr}{n-2}}d\mu_t \bigg)^{\frac{n-2}{n}}dt\\
\leq&C_{n,p}\cdot\max_{t\in [\tau',t_0]}\left(\int_{M_t}
h_\sigma^{r}d\mu_t\right)^{\frac{2}{n}}\int^{t_0}_{\tau'}\bigg
(\int_{M_t}|\nabla
h_\sigma^{\frac{r}{2}}|^2d\mu_t\\
&+\Big(1+n^{\frac{p}{p-n}}(2\Lambda)^{\frac{2p}{p-n}}
\Big)\int_{M_t} h_\sigma^rd\mu_t\bigg)dt.
\end{split}\end{equation}

From (\ref{ineq-h4dddd}) and (\ref{ineq-h5dddd}), we have
\begin{equation}\label{ineq-h6}
\begin{split}
\int^{t_0}_{\tau'}\int_{M_t} h_\sigma^{r(1+\frac{2}{n})}d\mu_tdt
\leq&c_3\bigg(c_2r^{\frac{2n}{p-n}+1}+\frac{1}{\tau'-\tau}\bigg)^{1+\frac{2}{n}}\\
&\times
\bigg(\int^{t_0}_{\tau}\int_{M_t}h_\sigma^rd\mu_tdt\bigg)^{1+\frac{2}{n}},
\end{split}\end{equation}
where $c_3=C_{n,p}\cdot
\max\{\frac{q}{q-1},\Big(1+n^{\frac{p}{p-n}}(2\Lambda)^{\frac{2p}{p-n}}
\Big)T_2\}$.

We put
\begin{equation*}J(r,t)=\int_t^{t_0}\int_{M_t}h_\sigma^rd\mu_tdt.
\end{equation*}
Then from (\ref{ineq-h6}) we have
\begin{equation}\label{ineq-h7}J\Big(r\Big(1+\frac{2}{n}\Big),\tau'\Big)\leq c_3\bigg(c_2r^{\frac{p}{p-n}}
+\frac{1}{\tau'-\tau}\bigg)^{1+\frac{2}{n}}J(r,\tau)^{1+\frac{2}{n}}.
\end{equation}
We let \begin{equation*}\mu=1+\frac{2}{n},\ \ r_k=q\mu^k,\ \
\tau_k=\bigg(1-\frac{1}{\mu^{k+1}}\bigg)t.\end{equation*} Notice
that $\mu>1$. From (\ref{ineq-h7}) we have
\begin{equation*}
\begin{split}J(r_{k+1},\tau_{k+1})^{\frac{1}{r_{k+1}}} \leq
c_3^{\frac{1}{r_{k+1}}}\bigg(c_2q^{\frac{p}{p-n}}
+\frac{\mu^2}{\mu-1}\cdot
\frac{1}{t}\bigg)^{\frac{1}{r_k}}\mu^{\frac{k}{r_k}\cdot
\frac{p}{p-n}}J(r_k,\tau_k)^{\frac{1}{r_{k}}}.
\end{split}\end{equation*}
Hence
\begin{equation*}
\begin{split}J(r_{m+1},\tau_{m+1})^{\frac{1}{r_{m+1}}} \leq&
c_3^{\sum_{k=0}^m\frac{1}{r_{k+1}}}\bigg(c_2q^{\frac{p}{p-n}}
+\frac{\mu^2}{\mu-1}\cdot
\frac{1}{t}\bigg)^{\sum_{k=0}^m\frac{1}{r_k}}\\
&\cdot\mu^{\frac{p}{p-n}\cdot\sum_{k=0}^m\frac{k}{r_k}}J(p,t)^{\frac{1}{p}}.
\end{split}\end{equation*}
As $m\rightarrow+\infty$, we conclude that
\begin{equation}\label{improt-estimate}
h_\sigma(x,t)\leq\bigg(1+\frac{2}{n}\bigg)^{\frac{np(n+2)}{4q(p-n)}}
c_3^{\frac{n}{2q}}\bigg(c_2q^{\frac{p}{p-n}}+\frac{(n+2)^2}{2nt}
\bigg)^{\frac{n+2}{2q}}\bigg(\int_{0}^{t_0}\int_{M_t}h_\sigma^{q}d\mu_tdt\bigg)^{\frac{1}{q}}.
\end{equation}
Now let $\sigma\rightarrow 0$. Then (\ref{improt-estimate}) implies
\begin{equation*}
|\mathring{A}|^2\leq\bigg(1+\frac{2}{n}\bigg)^{\frac{np(n+2)}{2q(p-n)}}
c_3^{\frac{n}{q}}\bigg(c_2q^{\frac{2n}{p-n}+1}+\frac{(n+2)^2}{2nt}
\bigg)^{\frac{n+2}{q}}\bigg(\int_{0}^{t_0}\int_{M_t}h_\sigma^{q}d\mu_tdt\bigg)^{\frac{2}{q}}.
\end{equation*}

Since $t_0\in (0,T_2]$ is arbitrary, we complete the proof of the
Lemma.
\end{proof}

Now we give the proof of Theorem \ref{thm:convergence-S^n+d}.
\begin{proof}[Proof of Theorem \ref{thm:convergence-S^n+d}]
We consider the submanifold $M_{T_2}$. From Lemmas
\ref{lem:A^0-S^n+d} and \ref{lem:A^0-pointwiseffffff}, we have
\begin{equation*}
|\mathring{A}|^2\leq\bigg(1+\frac{2}{n}\bigg)^{\frac{np(n+2)}{2q(p-n)}}
c_3^{\frac{n}{q}}\bigg(c_2q^{\frac{2n}{p-n}+1}+\frac{(n+2)^2}{2nT_2}
\bigg)^{\frac{n+2}{q}}T_2^\frac{2}{q}(2\varepsilon)^2:=c_4\varepsilon^2.
\end{equation*}
Set $\varepsilon_0=\Big( \frac{2}{c_4}\Big)^{\frac{1}{2}}$ for
$n\geq 4$ and $\varepsilon_0=\Big(
\frac{4}{3c_4}\Big)^{\frac{1}{2}}$ for $n=3$. If $\varepsilon\leq
\varepsilon_0$, then on $M_{T_2}$, we have $|A|^2\leq
\frac{|H|^2}{n-1}+2$ for $n\geq4$ and $|A|^2\leq
\frac{4|H|^2}{9}+\frac{4}{3}$ for $n=3$. Then by the convergence
theorem proved by Baker \cite{Baker} and the uniqueness of the mean
curvature flow, we see that the mean curvature flow with $F_0$ as
initial value either has a solution on a finite time interval
$[0,T)$ and $M_t$ converges to a round point as $t\rightarrow T$, or
has a solution on $[0,\infty)$ and $M_t$ converges to a totally
geodesic sphere in $\mathbb{S}^{n+d}$  as $t\rightarrow \infty$.
This completes the proof of Theorem \ref{thm:convergence-S^n+d}.
\end{proof}

\begin{coro}
Let $F:M^n\rightarrow \mathbb{S}^{n+d}$ $(n\geq3)$ be a smooth
closed submanifold. Let $C_1$ be as in Theorem
\ref{thm:convergence-S^n+d}. If $||{A}||_{L^p(M)}<C_1,$ then $M$ is
diffeomorphic to a unit n-sphere.
\end{coro}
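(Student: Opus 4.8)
The plan is to derive the corollary as an immediate consequence of Theorem \ref{thm:convergence-S^n+d}. The first step is to specialize the free parameters there: take $q=p$, which is legitimate since $p\in(n,\infty)\subset(1,\infty)$, and fix a value $\Lambda$ of the a priori $L^p$-bound (any fixed choice, e.g.\ $\Lambda=1$, will do). Then $C_1=C_1(n,p,p,\Lambda)$ is an explicit constant depending only on $n$ and $p$; after replacing it by $\min\{C_1,\Lambda\}$ if necessary, we may assume $C_1\le\Lambda$. Since $|A|^2=|\mathring{A}|^2+\tfrac1n|H|^2$, we have the pointwise bound $|\mathring{A}|\le|A|$ and hence $||\mathring{A}||_{L^p(M)}\le||A||_{L^p(M)}$. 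Therefore the single hypothesis $||A||_{L^p(M)}<C_1$ furnishes both assumptions required to run Theorem \ref{thm:convergence-S^n+d}: namely $||A||_{L^p(M)}\le\Lambda$ and $||\mathring{A}||_{L^p(M)}<C_1$. I expect this bookkeeping --- making sure the appeal to the theorem is not circular in $\Lambda$ --- to be the only point needing care, and it is resolved simply by pinning down one admissible pair $(\Lambda,C_1)$ with $C_1\le\Lambda$, which the explicit constants produced in Lemmas \ref{lem:time>T-S^n+d} through \ref{lem:A^0-pointwiseffffff} allow.

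Given that Theorem \ref{thm:convergence-S^n+d} applies, its dichotomy says that the mean curvature flow $F_t$ with $F$ as initial value either (1) exists on a finite interval $[0,T)$ with $M_t$ converging to a round point as $t\to T$, or (2) exists for all $t\ge0$ with $M_t$ converging smoothly to a totally geodesic $n$-sphere of $\mathbb{S}^{n+d}$ as $t\to\infty$. In both alternatives I would conclude that $M$ is diffeomorphic to the standard $\mathbb{S}^n$. Indeed, for every $t$ in the maximal interval of existence $F_t$ is a smooth immersion obtained from $F_0$ by a smooth isotopy, so $M=M_0$ is diffeomorphic to $M_t$. In case (2), for $t$ large $M_t$ is $C^\infty$-close to a totally geodesic sphere, hence is an embedded smooth $n$-sphere, so $M\cong\mathbb{S}^n$. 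In case (1), passing to the parabolically rescaled flow about the point to which $M_t$ collapses, $M_t$ is for $t$ near $T$ a small normal graph over a round sphere, again a smooth $n$-sphere, so $M\cong\mathbb{S}^n$.

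In short, the corollary requires no new analysis: all estimates are already in place, and the two ingredients are (i) the pointwise inequality $|\mathring{A}|\le|A|$, which converts the $L^p$-smallness of $A$ into the $L^q$-smallness of $\mathring{A}$ needed by Theorem \ref{thm:convergence-S^n+d}, and (ii) the observation that the diffeomorphism type of $M$ is preserved along the flow while the limiting object --- a round point or a totally geodesic sphere --- is that of $\mathbb{S}^n$. The only genuine subtlety is the parameter matching described above; once that is fixed, the proof is a one-line deduction.
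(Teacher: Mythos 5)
Your proposal is correct and matches the paper's (largely implicit) argument: the paper likewise deduces the statement from Theorem \ref{thm:convergence-S^n+d} by taking $q=p$, using $||\mathring{A}||_{L^p(M)}\leq||A||_{L^p(M)}$, and pinning down an admissible pair with the constant not exceeding the a priori bound (it uses $\min\{100,\,C_1(n,p,p,100)\}$, exactly your $\min\{C_1,\Lambda\}$ device), and then reads off the diffeomorphism with $\mathbb{S}^n$ from the two convergence alternatives. No essential difference in approach.
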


Write the constant obtained in Theorem \ref{thm:convergence-S^n+d}
as  $C_1=C_1(n,p,q,\Lambda)$. Since $||\mathring{A}||_{L^p(M)}\leq
||{A}||_{L^p(M)}$, if we put $C_{n,p}=\min\{100, C_1(n,p,p,100)\}$,
then Theorem \ref{main-thm-1} follows.

\begin{thrm}\label{thm:convergence-S^n+d_H}
Let $F_0:M^n\rightarrow \mathbb{S}^{n+d}$ $(n\geq3)$ be a smooth
closed submanifold. For given positive numbers $p\in(n,\infty)$ and
$q\in(n,\infty)$, there is a positive constant $C_2$ depending on
$n$, $p$, $q$, the upper bound $\Lambda$ on the $L^p$-norm of the
mean curvature of the submanifold, such that if
\begin{equation*}||\mathring{A}||_{L^q(M_0)}<C_2,\end{equation*}
then the mean curvature flow with $F_0$ as initial value has a
unique solution $F:M\times[0,T)\rightarrow \mathbb{S}^{n+d}$, and
either

$(1)$ $T<\infty$ and $M_t$ converges to a round point as
$t\rightarrow T$; or

$(2)$ $T=\infty$ and $M_t$ converges to a totally geodesic sphere in
$\mathbb{S}^{n+d}$ as $t\rightarrow \infty$.

\end{thrm}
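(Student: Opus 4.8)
The plan is to mirror the proof of Theorem \ref{thm:convergence-S^n+d}, replacing the roles of $|A|$ and $|\mathring A|$ where necessary so that the controlling quantity is the $L^p$-norm of the mean curvature rather than of the full second fundamental form. The overall strategy remains: first establish short-time control on the relevant integral curvature quantities, then bootstrap via the Sobolev inequality (Lemma \ref{Sobolev-ineq}) and a Moser-type iteration to a pointwise bound on $|\mathring A|^2$ at some small time $T_2>0$, and finally invoke Baker's pointwise-pinching convergence theorem together with uniqueness of the mean curvature flow.

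First I would prove the analogue of Lemma \ref{lem:time>T-S^n+d} for $|H|$ in place of $|A|$: starting from the evolution inequality (\ref{evo-ineq-H}) for $u=|H|^2$, namely $\partial_t u\le \Delta u - 2|\nabla H|^2 + 2|A|^2 u + 2nu$, one needs to absorb the $|A|^2u$ term. This is the point that differs most from the previous argument, since $|A|^2$ is no longer a priori controlled. Here I would use $|A|^2 = |\mathring A|^2 + \frac1n|H|^2$, so that $|A|^2 u \le |\mathring A|^2 u + \frac1n u^2$; combined with the smallness of $\|\mathring A\|_{L^q}$ (for $q>n$, which is exactly why the hypothesis on $q$ is strengthened in this theorem) and the Sobolev inequality, the $|\mathring A|^2 u$ term is handled by a Hölder/Sobolev estimate just as the $u^{p/2+1}$ term was handled in (\ref{ineq-fp+1-1}), while the $\frac1n u^2$ term is treated exactly as in Lemma \ref{lem:time>T-S^n+d}. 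This yields a differential inequality for $\int_{M_t}|H|^p\,d\mu_t$ of the same shape as (\ref{evo-int-u^p222222222}), hence (via the maximum principle and the extension Lemma \ref{lp-extension}, now applied with the full $|A|$ bound that follows once $|H|$ and $|\mathring A|$ are both controlled) a time $T_1>0$ depending only on $n,p,q,\Lambda$ with $\|H\|_{L^p(M_t)}\le 2\Lambda$ for $t\in[0,T_1]$.

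Next I would reprove Lemma \ref{lem:A^0-S^n+d}: using (\ref{evo-ineq-A_0}), $\partial_t|\mathring A|^2\le \Delta|\mathring A|^2 - 2|\nabla\mathring A|^2 + 13|A|^2|\mathring A|^2$, and again splitting $|A|^2=|\mathring A|^2+\frac1n|H|^2$. Now both $\int|H|^p$ (from the previous step) and $\int|\mathring A|^q$ (inductively, since it stays near $\varepsilon$) are small or bounded, so the term $\int_{M_t}|A|^2 h_\sigma^r\,d\mu_t$ is estimated by Hölder with the $L^p$-norm of $|H|$ and the $L^{p/\text{something}}$ (really $L^q$ on $|\mathring A|$) exactly as in (\ref{second-term0000000}), followed by the Sobolev inequality to trade off against the good gradient term. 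This produces the analogue of (\ref{ineq-h1ddddd1}) and hence a $T_2\in(0,T_1]$ with $\|\mathring A\|_{L^q(M_t)}\le 2\varepsilon$ on $[0,T_2]$. The Moser iteration of Lemma \ref{lem:A^0-pointwiseffffff} then goes through verbatim — it only used the differential inequality (\ref{ineq-h1ddddd1}) and the Sobolev inequality — giving the pointwise estimate $|\mathring A|^2\le c_4\varepsilon^2$ on $M_{T_2}$.

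The final step is identical to the end of the proof of Theorem \ref{thm:convergence-S^n+d}: choosing $\varepsilon$ small enough that $c_4\varepsilon^2\le 2$ (resp. $\le\frac43$ when $n=3$) forces the pointwise pinching $|A|^2\le \frac{|H|^2}{n-1}+2$ (resp. $|A|^2\le\frac{4|H|^2}{9}+\frac43$) on $M_{T_2}$, whereupon Baker's convergence theorem \cite{Baker} applied to the flow restarted at $T_2$, together with uniqueness of the mean curvature flow, gives the dichotomy: either $T<\infty$ and $M_t$ shrinks to a round point, or $T=\infty$ and $M_t$ converges to a totally geodesic sphere. The main obstacle, as indicated above, is the absorption of the $|A|^2|H|^2$ term in the evolution of $|H|^2$ in the first step; this is precisely why $q>n$ is required (so that the Sobolev/Hölder interpolation on $\int|\mathring A|^2 u$ closes), whereas in Theorem \ref{thm:convergence-S^n+d} the weaker $q>1$ sufficed because there the controlling integral bound was already on $|A|$ itself.
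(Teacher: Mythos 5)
Your overall strategy is the paper's: decompose $|A|^2=|\mathring A|^2+\frac1n|H|^2$, estimate the resulting terms in the evolutions of $|H|^2$ and of $h_\sigma=(|\mathring A|^2+nd\sigma^2)^{1/2}$ by H\"older (against $\|H\|_{L^p}$ and $\|\mathring A\|_{L^q}$, which is where $q>n$ enters) plus the Sobolev inequality of Lemma \ref{Sobolev-ineq}, run the Moser iteration to get a pointwise bound $|\mathring A|^2\le c\,\varepsilon^2$ at a definite positive time, and finish with Baker's pinching theorem and uniqueness. All of that matches the paper, including the final step of restarting at the later time slice.

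However, there is a genuine gap in the logical structure of your first two steps: they are circular as stated. Your estimate for $\frac{\partial}{\partial t}\int_{M_t}|H|^{p}d\mu_t$ uses the smallness of $\|\mathring A\|_{L^q(M_t)}$ for $t>0$ (to absorb $\int|\mathring A|^2w^r$) and also uses the Sobolev inequality on $M_t$, whose constant depends on $\|H\|_{L^p(M_t)}$; but the bound $\|\mathring A\|_{L^q(M_t)}\le 2\varepsilon$ is only produced in your second step, which in turn relies on the $\|H\|_{L^p(M_t)}\le 2\Lambda$ bound from the first. Neither quantity can be controlled before the other, so the sequential ``first $T_1$ for $H$, then $T_2\le T_1$ for $\mathring A$'' plan does not close; the parenthetical ``inductively, since it stays near $\varepsilon$'' is exactly the point that needs an argument. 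The paper resolves this with a simultaneous open--closed (bootstrap) argument: it sets $T=\sup\{t\in[0,T_{\max}):\|H\|_{L^p(M_t)}<2\Lambda,\ \|\mathring A\|_{L^q(M_t)}<2\varepsilon\}$, derives on $[0,T)$ the two coupled differential inequalities (each using both standing bounds), obtains the strict improvements $\|H\|_{L^p(M_t)}<\tfrac32\Lambda$ and $\|\mathring A\|_{L^q(M_t)}<\tfrac32\varepsilon$ up to times $T_1,T_2$ depending only on $n,p,q,\Lambda$, and then proves $T>\min\{T_1,T_2\}$; the borderline case $T=T_{\max}<\infty$ is excluded by the extension Lemma \ref{lp-extension}, which needs $\|A\|_{L^{\min\{p,q\}}}$ bounded and hence $\min\{p,q\}>n$ --- a second reason for the hypothesis $q>n$ beyond the H\"older--Sobolev interpolation you cite. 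Once this bootstrap scaffolding is added, the rest of your outline (Moser iteration on $[0,\tfrac{T_0}{2}]$ and the application of Baker's theorem) goes through as in the paper.
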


\begin{proof}
Suppose $||{H}||_{L^p(M_0)}\leq\Lambda$ and
$||\mathring{A}||_{L^q(M_0)}<\varepsilon$ for some fixed $p,q>n$ and
assume  $\varepsilon\in (0,100]$. Set $T=\sup\{t\in
[0,T_{\max}):||H||_{L^p(M_t)}<2\Lambda,||\mathring{A}||_{L^q(M_t)}<2\varepsilon\}.
$ We consider the mean curvature flow on the time interval [0, T).

From (\ref{evo-ineq-H}) we have for $w=|H|^2$
\begin{equation}\label{evo-w-S^n+p111}
    \frac{\partial}{\partial t}w\leq \Delta w + 2|\mathring{A}|^2w +\frac{2}{n}w^2  +
   2nw.
\end{equation}

For $r\geq \frac{p}{2}\geq \frac{3}{2}$, we have from
(\ref{evo-w-S^n+p111})
\begin{equation}\label{w-integral111dddd}
\begin{split}
\frac{1}{r}\frac{\partial}{\partial t}\int_{M_t} w^{r}d\mu_t
\leq&-\frac{4(r-1)}{r^2}\int_{M_t}|\nabla w
^{\frac{r}{2}}|^2d\mu_t\\
&+2\int_{M_t}|\mathring{A}|^2w^{r}d\mu_t+\frac{2}{n}\int_{M_t}w^{r+1}d\mu_t+{2n}\int_{M_t}w^{r}d\mu_t.
\end{split}
\end{equation}

By the definition of $T$,  we know that for any Lipschitz function
$v$ and $t\in [0,T)$, there holds
\begin{equation}\label{sobo-Mt-S^n+d}\bigg(\int_{M_t} v^{\frac{2n}{n-2}}d\mu_t\bigg)^{\frac{n-2}{n}}\leq
C_{n,p}\bigg(\int_{M_t}|\nabla
v|^2d\mu_t+\Big(1+(2\Lambda)^{\frac{2p}{p-n}}\Big)\int_{M_t}
v^2d\mu_t\bigg).
\end{equation}

For the second term of the right hand side of
(\ref{w-integral111dddd}), we have for any $\mu>0$
\begin{equation}
\begin{split}\label{second-term-H-s^n+d}
\int_{M_t}|\mathring{A}|^2w^rd\mu_t
\leq&\bigg(\int_{M_t}|\mathring{A}|^{q}d\mu_t\bigg)^{\frac{2}{q}}\cdot\bigg(\int_{M_t}w^{r\cdot
\frac{q}{q-2}}d\mu_t\bigg)^{\frac{q-2}{q}}\\
\leq&200^{2}\bigg(\int_{M_t}w^r d\mu_t\bigg)^{\frac{q-n}{q}}
\cdot\bigg(\int_{M_t}(w^r)^{\frac{n}{n-2}}
d\mu_t\bigg)^{\frac{n-2}{n}\cdot \frac{n}{q}}\\
\leq &200^{2}\bigg(\int_{M_t}w^r
d\mu_t\bigg)^{\frac{q-n}{q}}\cdot\bigg[C_{n,p}\bigg(\int_{M_t}|\nabla
w^{\frac{r}{2}}|^2d\mu_t\\
&+\Big(1+(2\Lambda)^{\frac{2p}{p-n}}\Big)\int_{M_t}
w^rd\mu_t\bigg)\bigg]^{\frac{n}{q}}\\
\leq&200^{2}\bigg(\int_{M_t}w^r
d\mu_t\bigg)^{\frac{q-n}{q}}\cdot\bigg[C_{n,p}^{\frac{n}{q}}\bigg(\int_{M_t}|\nabla
w^{\frac{r}{2}}|^2d\mu_t\bigg)^{\frac{n}{q}}\\
&+\Big(1+(2\Lambda)^{\frac{2p}{p-n}}\Big)^{\frac{n}{q}}C_{n,p}^{\frac{n}{q}}\bigg(\int_{M_t}
w^rd\mu_t\bigg)^{\frac{n}{q}}\bigg]\\
=&200^2\Big(1+(2\Lambda)^{\frac{2p}{p-n}}\Big)^{\frac{n}{q}}C_{n,p}^{\frac{n}{q}}\int_{M_t}
w^rd\mu_t\\
&+200^{2}C_{n,p}^{\frac{n}{q}}\bigg(\int_{M_t}w^r
d\mu_t\bigg)^{\frac{q-n}{q}}\cdot\bigg(\int_{M_t}|\nabla
w^{\frac{r}{2}}|^2d\mu_t\bigg)^{\frac{n}{q}}\\
\leq&200^2\Big(1+(2\Lambda)^{\frac{2p}{p-n}}\Big)^{\frac{n}{q}}C_{n,p}^{\frac{n}{q}}\int_{M_t}
w^rd\mu_t\\
&+200^{2}C_{n,p}^{\frac{n}{q}}\cdot\frac{q-n}{q}\mu^{\frac{q}{q-n}}\int_{M_t}w^r
d\mu_t+200^{2}C_{n,p}^{\frac{n}{q}}\cdot\frac{n}{q}\mu^{-\frac{q}{n}}\int_{M_t}|\nabla
w^{\frac{r}{2}}|^2d\mu_t.
\end{split}
\end{equation}

For the third term of the right hand side of
(\ref{w-integral111dddd}), we have for any $\epsilon>0$
\begin{equation}\label{last-term-H-s^n+p}
\begin{split}
\int_{M_t} w^{r+1}d\mu_t \leq&
\bigg(\int_{M_t}w^{\frac{p}{2}}d\mu_t\bigg)^{{\frac{2}{p}}}\cdot
\bigg(\int_{M_t}(w^{r})^{{\frac{p}{p-2}}}d\mu_t\bigg)^{{\frac{p-2}{p}}}\\
\leq& (2\Lambda)^2\cdot
\bigg(\int_{M_t}w^{r}d\mu_t\bigg)^{\frac{p-n}{p}}\cdot
\bigg(\int_{M_t}(w^{\frac{r}{2}})^{\frac{2n}{n-2}}d\mu_t\bigg)^{\frac{n-2}{p}}\\
\leq& (2\Lambda)^2\cdot
\bigg(\int_{M_t}w^{r}d\mu_t\bigg)^{\frac{p-n}{p}}\\
&\times\bigg[C_{n,p}\bigg(\int_{M_t}|\nabla
w^{\frac{r}{2}}|^2d\mu_t+\Big(1+(2\Lambda)^{\frac{2p}{p-n}}\Big)
\cdot\int_{M_t}w^rd\mu_t\bigg)\bigg]^{\frac{n}{p}}\\
\leq&
(2\Lambda)^2\cdot\bigg(\int_{M_t}w^{r}d\mu_t\bigg)^{{\frac{p-n}{p}}}
\cdot\bigg[C_{n,p}^{\frac{n}{p}}\bigg(\int_{M_t}|\nabla
w^{\frac{p}{4}}|^2d\mu_t\bigg)^{\frac{n}{p}}\\
&+C_{n,p}^{\frac{n}{p}}\Big(1+(2\Lambda)^{\frac{2p}{p-n}}\Big)^{\frac{n}{p}}
\bigg(\int_{M_t}w^{r}d\mu_t\bigg)^{\frac{n}{p}} \bigg]
\end{split}
\end{equation}
\begin{equation*}
\begin{split}
=&   (2\Lambda)^2\Big(1+(2\Lambda)^{\frac{2p}{p-n}}\Big)^{\frac{n}{p}}\cdot    C_{n,p}^{\frac{n}{p}}\cdot \int_{M_t}w^{r}d\mu_t\\
&+ (2\Lambda)^2\cdot C_{n,p}^{\frac{n}{p}}
\bigg(\int_{M_t}w^{r}d\mu_t\bigg)^{{\frac{p-n}{p}}}\cdot
\bigg(\int_{M_t}|\nabla
w^{\frac{p}{4}}|^2d\mu_t\bigg)^{\frac{n}{p}}\\
\leq&
(2\Lambda)^2\Big(1+(2\Lambda)^{\frac{2p}{p-n}}\Big)^{\frac{n}{p}}\cdot
C_{n,p}^{\frac{n}{p}}\cdot \int_{M_t}w^{r}d\mu_t\\
&+(2\Lambda)^2\cdot
C_{n,p}^{\frac{n}{p}}\frac{p-n}{p}\epsilon^{\frac{p}{p-n}}\cdot\int_{M_t}w^{r}d\mu_t
+(2\Lambda)^2\cdot
C_{n,p}^{\frac{n}{p}}\frac{n}{p}\epsilon^{-\frac{p}{n}}\cdot\int_{M_t}|\nabla
w^{\frac{p}{4}}|^2d\mu_t.
\end{split}
\end{equation*}

Combining (\ref{w-integral111dddd}), (\ref{second-term-H-s^n+d}) and
(\ref{last-term-H-s^n+p}) we have

\begin{equation}\label{w-integraxxxxxx}
\begin{split}
\frac{\partial}{\partial t}\int_{M_t} w^{r}d\mu_t
\leq&\bigg(2r\cdot200^{2}C_{n,p}^{\frac{n}{q}}\cdot\frac{n}{q}\mu^{-\frac{q}{n}}+\frac{2}{n}r(2\Lambda)^2\cdot
C_{n,p}^{\frac{n}{p}}\frac{n}{p}\epsilon^{-\frac{p}{n}}-\frac{4(r-1)}{r}\bigg)\int_{M_t}|\nabla
w^{\frac{r}{2}}|^2d\mu_t\\
&+\bigg(2r\cdot
200^2\Big(1+(2\Lambda)^{\frac{2p}{p-n}}\Big)^{\frac{n}{q}}C_{n,p}^{\frac{n}{q}}+2r\cdot200^{2}C_{n,p}^{\frac{n}{q}}\cdot\frac{q-n}{q}\mu^{\frac{q}{q-n}}\\
&+\frac{2}{n}r\cdot
(2\Lambda)^2\Big(1+(2\Lambda)^{\frac{2p}{p-n}}\Big)^{\frac{n}{p}}\cdot
C_{n,p}^{\frac{n}{p}}+ \frac{2}{n}r\cdot(2\Lambda)^2\cdot
C_{n,p}^{\frac{n}{p}}\frac{p-n}{p}\epsilon^{\frac{p}{p-n}}\\
&+2nr\bigg)\int_{M_t}w^{r}d\mu_t.
\end{split}
\end{equation}

 Set
$c_5=2\cdot200^{2}C_{n,p}^{\frac{n}{q}}\cdot\frac{n}{q}+\frac{2}{n}(2\Lambda)^2\cdot
C_{n,p}^{\frac{n}{p}}\frac{n}{p}$ and $c_6=2\cdot
200^2\Big(1+(2\Lambda)^{\frac{2p}{p-n}}\Big)^{\frac{n}{q}}C_{n,p}^{\frac{n}{q}}
+2\cdot200^{2}C_{n,p}^{\frac{n}{q}}\cdot\frac{q-n}{q}\Big(\frac{c_5p}{4(p-2)}\Big)^{\frac{n}{q-n}}+\frac{2}{n}\cdot
(2\Lambda)^2\Big(1+(2\Lambda)^{\frac{2p}{p-n}}\Big)^{\frac{n}{p}}\cdot
C_{n,p}^{\frac{n}{p}}+ \frac{2}{n}\cdot(2\Lambda)^2\cdot
C_{n,p}^{\frac{n}{p}}\frac{p-n}{p}\Big(\frac{c_5p}{4(p-2)}\Big)^{\frac{n}{p-n}}+2n$.
Let $\mu=\Big(\frac{c_5r^2}{4(r-1)}\Big)^{\frac{n}{q}}$ and
$\epsilon=\Big(\frac{c_5r^2}{4(r-1)}\Big)^{\frac{n}{p}}$. Then from
(\ref{w-integraxxxxxx}) we get
\begin{equation}\label{w-integrfffffdddf}
\frac{\partial}{\partial t}\int_{M_t} w^{r}d\mu_t\leq c_6
r^{\max\{\frac{n}{p-n},\frac{n}{q-n}\}+1}\int_{M_t}w^{r}d\mu_t.
\end{equation}
Take $r=\frac{p}{2}$. Then for $t\in[0,\min\{T,T_1\})$, where
$T_1=\frac{p\ln \frac{3}{2}}{ c_6
(\frac{p}{2})^{\max\{\frac{n}{p-n},\frac{n}{q-n}\}+1}}$, there holds
$||H||_{L^p(M_t)}<\frac{3}{2}\Lambda$.

From (\ref{evo-ineq-A_0}) we have the following inequality for
$h_\sigma$.
\begin{equation}\label{h-inequality}
\frac{\partial}{\partial t}h_\sigma\leq\triangle
h_\sigma+13|\mathring{A}|^2h_\sigma+\frac{2}{n}|H|^2h_\sigma.
\end{equation}

For any $r\geq q>1$, we have
\begin{equation}
\begin{split}\label{ineq-h1111111ddddddd1}
\frac{1}{r}\frac{\partial}{\partial t}\int_{M_t} h_\sigma^{r}d\mu_t
=&\int_{M_t}h_\sigma^{r-1}\frac{\partial}{\partial t}h_\sigma d\mu_t
+\frac{1}{r}\int_{M_t} h_\sigma^{p} \frac{\partial}{\partial t}d\mu_t\\
\leq&-\frac{4(r-1)}{r^2}\int_{M_t}|\nabla h_\sigma
^{\frac{r}{2}}|^2d\mu_t+13\int_{M_t}|\mathring{A}|^2h_\sigma^rd\mu_t+\frac{2}{n}\int_{M_t}|H|^2h_\sigma^rd\mu_t.
\end{split}
\end{equation}

For the second term of the right hand side of
(\ref{ineq-h1111111ddddddd1}), as (\ref{second-term-H-s^n+d}) we
have for any $\nu>0$
\begin{equation}
\begin{split}\label{second-term-H1122uu22}
\int_{M_t}|\mathring{A}|^2h_\sigma^rd\mu_t
\leq&200^2\Big(1+(2\Lambda)^{\frac{2p}{p-n}}\Big)^{\frac{n}{q}}C_{n,p}^{\frac{n}{q}}\int_{M_t}
h_\sigma^rd\mu_t\\
&+200^{2}C_{n,p}^{\frac{n}{q}}\cdot\frac{q-n}{q}\nu^{\frac{q}{q-n}}\int_{M_t}h_\sigma^r
d\mu_t\\
&+200^{2}C_{n,p}^{\frac{n}{q}}\cdot\frac{n}{q}\nu^{-\frac{q}{n}}\int_{M_t}|\nabla
h_\sigma^{\frac{r}{2}}|^2d\mu_t.
\end{split}
\end{equation}

Similarly, for the last term of the right hand side of
(\ref{ineq-h1111111ddddddd1}), we have for any $\vartheta >0$

\begin{equation}
\begin{split}\label{third-term-H2233332}
\int_{M_t}|H|^2h_\sigma^rd\mu_t
\leq&(2\Lambda)^2\Big(1+(2\Lambda)^{\frac{2p}{p-n}}\Big)^{\frac{n}{p}}C_{n,p}^{\frac{n}{p}}\int_{M_t}
h_\sigma^rd\mu_t\\
&+(2\Lambda)^{2}C_{n,p}^{\frac{n}{p}}\cdot\frac{p-n}{p}\vartheta^{\frac{p}{p-n}}\int_{M_t}h_\sigma^r
d\mu_t\\
&+(2\Lambda)^{2}C_{n,p}^{\frac{n}{p}}\cdot\frac{n}{p}\vartheta
^{-\frac{p}{n}}\int_{M_t}|\nabla h_\sigma^{\frac{r}{2}}|^2d\mu_t.
\end{split}
\end{equation}

Combining (\ref{ineq-h1111111ddddddd1}),
(\ref{second-term-H1122uu22}) and (\ref{third-term-H2233332}), we
obtain
\begin{equation}
\begin{split}\label{ineq-hhhhheee}
&\frac{\partial}{\partial t}\int_{M_t} h_\sigma^{r}d\mu_t\\
\leq&\bigg(13r\cdot200^{2}C_{n,p}^{\frac{n}{q}}\cdot\frac{n}{q}\nu^{-\frac{q}{n}}+\frac{2}{n}r\cdot
(2\Lambda)^{2}C_{n,p}^{\frac{n}{p}}\cdot\frac{n}{p}\vartheta
^{-\frac{p}{n}} -\frac{4(r-1)}{r}\bigg)\int_{M_t}|\nabla h_\sigma
^{\frac{r}{2}}|^2d\mu_t\\
&+\bigg(13r\cdot200^2\Big(1+(2\Lambda)^{\frac{2p}{p-n}}\Big)^{\frac{n}{q}}C_{n,p}^{\frac{n}{q}}
+13r\cdot200^{2}C_{n,p}^{\frac{n}{q}}\cdot\frac{q-n}{q}\nu^{\frac{q}{q-n}}\\
&+\frac{2}{n}r\cdot
(2\Lambda)^2\Big(1+(2\Lambda)^{\frac{2p}{p-n}}\Big)^{\frac{n}{p}}C_{n,p}^{\frac{n}{p}}
+\frac{2}{n}r\cdot(2\Lambda)^{2}C_{n,p}^{\frac{n}{p}}\cdot\frac{p-n}{p}\vartheta^{\frac{p}{p-n}}
 \bigg)\int_{M_t}
h_\sigma^rd\mu_t.
\end{split}
\end{equation}

Set
$c_7=13\cdot200^{2}C_{n,p}^{\frac{n}{q}}\cdot\frac{n}{q}+\frac{2}{n}\cdot
(2\Lambda)^{2}C_{n,p}^{\frac{n}{p}}\cdot\frac{n}{p}$ and
$c_8=13\cdot200^2\Big(1+(2\Lambda)^{\frac{2p}{p-n}}\Big)^{\frac{n}{q}}C_{n,p}^{\frac{n}{q}}
+13\cdot200^{2}C_{n,p}^{\frac{n}{q}}\cdot\frac{q-n}{q}\Big(\frac{c_7q}{3(q-1)}\Big)^{\frac{n}{q-n}}+\frac{2}{n}\cdot
(2\Lambda)^2\Big(1+(2\Lambda)^{\frac{2p}{p-n}}\Big)^{\frac{n}{p}}C_{n,p}^{\frac{n}{p}}
+\frac{2}{n}\cdot(2\Lambda)^{2}C_{n,p}^{\frac{n}{p}}\cdot\frac{p-n}{p}\Big(\frac{c_7q}{3(q-1)}\Big)^{\frac{n}{p-n}}$.
Let $\nu=\Big(\frac{c_7r^2}{3(r-1)}\Big)^{\frac{n}{q}}$ and
$\vartheta=\Big(\frac{c_7r^2}{3(r-1)}\Big)^{\frac{n}{p}}$. Then
(\ref{ineq-hhhhheee}) implies
\begin{equation}
\begin{split}\label{ineq-ffffffee}
\frac{\partial}{\partial t}\int_{M_t} h_\sigma^{r}d\mu_t+
\bigg(1-\frac{1}{q}\bigg)\int_{M_t}|\nabla h_\sigma
^{\frac{r}{2}}|^2d\mu_t\leq
c_8r^{\max\{\frac{n}{p-n},\frac{n}{q-n}\}+1} \int_{M_t}
h_\sigma^rd\mu_t.
\end{split}
\end{equation}
Take $r=p$. Then for $t\in [0,\min\{T,T_2\})$, where $T_2=
\frac{p\ln \frac{3}{2}}{ c_8
q^{\max\{\frac{n}{p-n},\frac{n}{q-n}\}+1}}$, we have
$||\mathring{A}||_{L^q(M_t)}<\frac{3}{2}\varepsilon$.

We claim that $T>\min\{T_1,T_2\}$. Suppose not, i.e.,
$T\leq\min\{T_1,T_2\}$. If $T<T_{\max}$, then by the smooth of the
mean curvature flow and the definition of $T$, we get a
contradiction. If $T=T_{\max}$, then $T_{\max}$ must be $\infty$. If
not, by the definition of $T$, for $t\in [0,T_{\max})$ we have
$||H||_{L^p(M_t)}<2\Lambda$ and
$||\mathring{A}||_{L^q(M_t)}<2\varepsilon\leq 200$. This implies
$||A||_{L^{\min\{p,q\}}(M_t)}<\infty$ for any $t\in [0,T_{\max})$.
Then by Lemma \ref{lp-extension}, the mean curvature flow can be
extended over time $T_{\max}$, which is a contradiction. Hence we
obtain that $T>\min\{T_1,T_2\}$.

Set $T_0=\min\{T_1,T_2\}$. We consider the mean curvature flow on
$[0,\frac{T_0}{2}]$. Then we know that (\ref{ineq-ffffffee}) holds
for any $t\in [0,\frac{T_0}{2}]$.
 By a standard Moser iteration as before, we have for  any $t\in
 (0,\frac{T_0}{2}]$, there holds
\begin{equation*}
h_\sigma(x,t)\leq\bigg(1+\frac{2}{n}\bigg)^{\hat{q}}
c_{9}^{\frac{n}{2q}}\bigg(c_{8}q^{\max\{\frac{n}{p-n},\frac{n}{q-n}\}
+1}+\frac{(n+2)^2}{2nt}
\bigg)^{\frac{n+2}{2q}}\bigg(\int_{0}^{\frac{T_0}{2}}\int_{M_t}h_\sigma^{q}d\mu_tdt\bigg)^{\frac{1}{q}},
\end{equation*}
where  $c_{9}=C_{n,p}\cdot \max\Big\{\frac{q}{q-1},
\Big(1+(2\Lambda)^{\frac{2p}{p-n}}\Big)\cdot T_0\Big\}$ and
$\hat{q}=\frac{n(n+2)}{4q}\cdot\Big(\max\{\frac{n}{p-n},\frac{n}{q-n}\}
+1 \Big)$. Letting  $\sigma\rightarrow 0$, we get at time
$\frac{T_0}{2}$
\begin{equation*}
|\mathring{A}|^2\leq\bigg(1+\frac{2}{n}\bigg)^{2\hat{q}}
c_{9}^{\frac{n}{q}}\bigg(c_{26}q^{\max\{\frac{n}{p-n},\frac{n}{q-n}\}
+1}+\frac{(n+2)^2}{nT_0} \bigg)^{\frac{n+2}{q}}\cdot
\Big(\frac{T_0}{2}\Big)^{\frac{2}{q}}(2\varepsilon)^2:=c_{10}\varepsilon^2.
\end{equation*}

Set $\varepsilon_0=\Big( \frac{2}{c_{10}}\Big)^{\frac{1}{2}}$ for
$n\geq 4$ and $\varepsilon_0=\Big(
\frac{4}{3c_{10}}\Big)^{\frac{1}{2}}$ for $n=3$. Then if
$\varepsilon\leq \varepsilon_0$, we have $|A|^2\leq
\frac{|H|^2}{n-1}+2$ for $n\geq4$ and $|A|^2\leq
\frac{4|H|^2}{9}+\frac{4}{3}$ for $n=3$ on $M_{\frac{T_0}{2}}$. By
the convergence theorem proved by Baker \cite{Baker} and the
uniqueness of the mean curvature flow, we see that the mean
curvature flow with $F_0$ as initial value either has a solution on
a finite time interval $[0,T)$ and $M_t$ converges to a round point
as $t\rightarrow T$, or has a solution on $[0,\infty)$ and $M_t$
converges to a totally geodesic sphere in $\mathbb{S}^{n+d}$  as
$t\rightarrow \infty$. This completes the proof of Theorem
\ref{thm:convergence-S^n+d_H}.
\end{proof}

\begin{coro}
Let $F:M^n\rightarrow \mathbb{S}^{n+d}$ $(n\geq3)$ be a smooth
closed submanifold. Let $C_2$ be as in Theorem
\ref{thm:convergence-S^n+d_H}. If $||{A}||_{L^p(M)}<C_2,$ then $M$
is diffeomorphic to a unit n-sphere.
\end{coro}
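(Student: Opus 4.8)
The strategy mirrors the deduction of Corollary~\ref{Coro:3} from Theorem~\ref{main-thm-1}: we run the mean curvature flow with $F$ as initial value and read off the diffeomorphism type of $M$ from the flow's asymptotic behaviour, which Theorem~\ref{thm:convergence-S^n+d_H} controls.

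First I would record the two elementary pointwise inequalities $|H|^2\le n|A|^2$ (from $H=\sum_iA(e_i,e_i)$ together with the Cauchy--Schwarz inequality) and $|\mathring{A}|^2=|A|^2-\frac1n|H|^2\le|A|^2$. Integrating $p$-th powers over $M$ gives $||H||_{L^p(M)}\le\sqrt n\,||A||_{L^p(M)}$ and $||\mathring{A}||_{L^p(M)}\le||A||_{L^p(M)}$. Hence, taking $q=p$, a single smallness hypothesis on $||A||_{L^p}$ controls both quantities that enter Theorem~\ref{thm:convergence-S^n+d_H}: the $L^p$-norm of the mean curvature (the role of $\Lambda$ there) and the $L^q$-norm of the traceless second fundamental form (the quantity required to be small).

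Next I would fix the constants, which is the only point needing any care. Writing $C_2(n,p,q,\Lambda)$ for the constant furnished by Theorem~\ref{thm:convergence-S^n+d_H} and recalling that $p\in(n,\infty)$, set $q=p$ and $\Lambda=100\sqrt n$, and put $C_2:=C_2(n,p,p,100\sqrt n)$. By the construction in the proof of Theorem~\ref{thm:convergence-S^n+d_H} one has $C_2\le100$. Consequently, if $||A||_{L^p(M)}<C_2$ then $||H||_{L^p(M)}\le\sqrt n\,||A||_{L^p(M)}<\sqrt n\,C_2\le100\sqrt n=\Lambda$ and $||\mathring{A}||_{L^p(M)}\le||A||_{L^p(M)}<C_2$, so the hypotheses of Theorem~\ref{thm:convergence-S^n+d_H} are satisfied for $F$; there is no circularity, since $\Lambda$ is fixed first and the required bounds are then verified.

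Finally I would invoke Theorem~\ref{thm:convergence-S^n+d_H}: the mean curvature flow with $F_0=F$ admits a unique solution on a maximal interval $[0,T)$, and either $T<\infty$ and $M_t$ converges to a round point, or $T=\infty$ and $M_t$ converges to a totally geodesic sphere in $\mathbb{S}^{n+d}$. As long as the flow is smooth, $F_t$ is a family of immersions of the fixed manifold $M$, and in either alternative the convergence in the sense of Baker \cite{Baker} exhibits $M$, as a smooth manifold, as diffeomorphic to the standard $n$-sphere. Therefore $M=M_0$ is diffeomorphic to $\mathbb{S}^n$, as claimed. The proof thus involves no new analytic obstacle: once the bound $C_2\le100$ from the proof of Theorem~\ref{thm:convergence-S^n+d_H} is used, everything reduces to the inequalities $|H|\le\sqrt n\,|A|$, $|\mathring A|\le|A|$ and a direct citation of the convergence theorem.
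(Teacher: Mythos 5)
Your proposal is correct and follows essentially the same route the paper takes: with $q=p$, the pointwise bounds $|\mathring A|\le|A|$ and $|H|\le\sqrt n\,|A|$ reduce the hypothesis $||A||_{L^p(M)}<C_2$ to the hypotheses of Theorem \ref{thm:convergence-S^n+d_H} with $\Lambda=100\sqrt n$, and either convergence alternative yields that $M$ is diffeomorphic to $\mathbb{S}^n$. The only cosmetic difference is that the paper secures the bound needed for the choice of $\Lambda$ by explicitly taking $\min\{100,\,C_2(n,p,p,100\,n^{1/2})\}$, whereas you appeal to the normalization $\varepsilon\in(0,100]$ inside the proof of Theorem \ref{thm:convergence-S^n+d_H} to assert $C_2\le 100$; both are legitimate.
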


Write the constant obtained in Theorem \ref{thm:convergence-S^n+d_H}
as  $C_2=C_2(n,p,q,\Lambda)$. Since $||\mathring{A}||_{L^p(M)}\leq
||{A}||_{L^p(M)}$, if we put $C_{n,p}=\min\{100, C_2(n,p,p,100\cdot
n^{\frac{1}{2}})\}$, then Theorem \ref{main-thm-1} also follows.
Hence, the pinching constant $C_{n,p}$ in Theorem \ref{main-thm-1}
can be chosen as  $C_{n,p}=\min \Big\{100,
\max\{C_1(n,p,p,100),C_2(n,p,p,100\cdot n^{\frac{1}{2}})\}\Big\}$.

\begin{remark} From the proofs of Lemma \ref{Sobolev-ineq}, Theorem
\ref{thm:convergence-S^n+d} and Theorem
\ref{thm:convergence-S^n+d_H}, the constant $C_{n,p}$ in Theorem
\ref{main-thm-1} can be computed explicitly.
\end{remark}

\end{document}